\documentclass[10pt,article]{amsart}
\usepackage{amsfonts}
\usepackage{indentfirst,amsthm,bm,amsmath}
\usepackage{hyperref}
\makeatletter
\date{}
\newtheorem{theorem}{Theorem}[section]

\newtheorem{lemma}{Lemma}[section]

\newtheorem{remark}{Remark}[section]

\begin{document}
\title[Uniqueness problems of meromorphic mappings]{Multiple values and uniqueness problem of meromorphic mappings
sharing hypersurfaces}
\author[Tingbin Cao]{Tingbin Cao}
\address[Tingbin Cao]{Department of Mathematics, Nanchang University, Nanchang, Jiangxi 330031, China} \email{tbcao@ncu.edu.cn(Corresponding author)}
\thanks{This first author was supported in part by the NSFC (no.11461042) and the NSF of Jiangxi Provence in China (no. 20161BAB201007).}
\author[Hongzhe Cao]{Hongzhe Cao}
\address[Hongzhe Cao]{Department of Mathematics, Nanchang
University, Nanchang, Jiangxi 330031, China}
\email{hongzhecao@126.com}
\thanks{This second author was supported in part by the NSFC (no.11401291).}
\date{}

\begin{abstract}The purpose of this article is to deal with the multiple values and uniqueness problem
of meromorphic mappings from $\mathbb{C}^{m}$ into the complex
projective space $\mathbb{P}^{n}(\mathbb{C})$ sharing fixed and moving hypersurfaces.
We obtain several uniqueness theorems which extend some
known results.\end{abstract} \subjclass[2010]{Primary 32H30,
Secondary 32A22, 30D35}\keywords{Meromorphic mapping, uniqueness
theorem, Nevanlinna theory, hypersurfaces}
\maketitle

\section{Introduction and main results}
In 1926, R. Nevanlinna \cite{nevanlinna} proved the well-known
five-value theorem that for two nonconstant meromorphic functions
$f$ and $g$ on the complex plane $\mathbb{C},$ if they have the same
inverse images (ignoring multiplicities) for five distinct values in
$\mathbb{P}^{1}(\mathbb{C}),$ then $f=g.$ We know that
the number five of distinct values in Nevanlinna's five-value
theorem cannot be reduced to four. For example, $f(z)=e^{z}$ and
$g(z)=e^{-z}$ share four values $0, 1, -1, \infty$ (ignoring
multiplicities), but $f(z)\not\equiv g(z).$ \par

Fujimoto \cite{fujimoto-1} generalized the Nevanlinna's well-known five-value theorem to the case of meromorphic mappings from $\mathbb{C}^{m}$ into $\mathbb{P}^{n}(\mathbb{C}),$ and obtained that for two linearly nondegenerate meromorphic mappings $f, g$ of $\mathbb{C}^{m}$ into $\mathbb{P}^{n}(\mathbb{C}),$ if they have the same inverse images of $3n+2$ hyperplanes counted with multiplicities in $\mathbb{P}^{n}(\mathbb{C})$ in general position, then $f=g.$\par

In 1983, Smiley \cite{smiley} considered meromorphic mappings which share $3n+2$ hyperplanes of $\mathbb{P}^{n}(\mathbb{C})$ without counting multiplicity and proved the following result.\par\vskip 6pt

\begin{theorem} \cite{smiley} \label{TA} Let $f, g$ be linearly nondegenerate meromorphic mappings of $\mathbb{C}^{m}$ into $\mathbb{P}^{n}(\mathbb{C}).$ Let
${\{H_{j}}\}_{j=1}^{q}(q\geq 3n+2)$ be hyperplanes in $\mathbb{P}^{n}(\mathbb{C})$ in generate position. Assume that\\
$(a) \dim (f^{-1}(H_{i})\cap f^{-1} (H_{j}))\leq m-2$ for all $1\leq i<j\leq q,$ \\
$(b) f(z)=g(z)$ on $\cup_{j=1}^{q}f^{-1}(H_{j}),$\\
$(c) f^{-1}(H_{j})=g^{-1}(H_{j})$ for all $ 1\leq j\leq q.$\\
Then $f=g.$\end{theorem}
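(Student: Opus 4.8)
The plan is to argue by contradiction, deriving from $f\not\equiv g$ a violation of Cartan's Second Main Theorem. Fix reduced representations $f=(f_{0}:\cdots:f_{n})$ and $g=(g_{0}:\cdots:g_{n})$, and for a hyperplane $H=\{\sum_{k}a_{k}x_{k}=0\}$ write $(f,H)=\sum_{k}a_{k}f_{k}$, an entire function on $\mathbb{C}^{m}$, and similarly $(g,H)$. Since $f\not\equiv g$, not all of the entire functions $F_{pq}:=f_{p}g_{q}-f_{q}g_{p}$ vanish identically; fix one with $F:=F_{p_{0}q_{0}}\not\equiv 0$. Hypothesis $(b)$ forces $f=g$ on $S:=\bigcup_{j=1}^{q}f^{-1}(H_{j})$, so every $F_{pq}$, in particular $F$, vanishes on $S$; and by $(a)$ the sets $f^{-1}(H_{j})$ are pairwise disjoint outside an analytic subset of codimension $\ge 2$, so their reduced divisors add up without cancellation.

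The first step is a counting-function estimate. Using $|F|\le 2\,\|f\|\,\|g\|$ one gets $T(r,F)\le T_{f}(r)+T_{g}(r)+o\big(T_{f}(r)+T_{g}(r)\big)$, and since $F$ is entire and vanishes on $S$, Jensen's formula gives
\[
\sum_{j=1}^{q}N^{[1]}_{f}(r,H_{j})\;\le\;N\big(r,\tfrac{1}{F}\big)\;\le\;T(r,F)+O(1)\;\le\;T_{f}(r)+T_{g}(r)+o\big(T_{f}(r)+T_{g}(r)\big),
\]
where $N^{[1]}_{f}(r,H_{j})$ denotes the counting function of $f^{-1}(H_{j})=g^{-1}(H_{j})$ with multiplicities truncated to $1$. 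I would also record a structural dichotomy. Put $h_{j}:=(f,H_{j})/(g,H_{j})$, a meromorphic function whose zeros and poles lie, by $(c)$, only on $f^{-1}(H_{j})$, and call $i,j$ equivalent when $h_{i}\equiv h_{j}$. If some class $C$ had $\#C\ge n+1$, then, since any $n+1$ of the forms defining the $H_{j}$ are linearly independent, solving for the coordinates from $\{(f,H_{j})\}_{j\in C}$ would give $f_{k}=h\,g_{k}$ for all $k$ (with $h=h_{i_{0}}$, $i_{0}\in C$), i.e.\ $f\equiv g$, contrary to assumption. Hence every class has at most $n$ elements, so the $h_{j}$ realise at least $\lceil q/n\rceil\ge 4$ distinct values.

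Now apply Cartan's Second Main Theorem with truncation level $n$ to the linearly nondegenerate maps $f$ and $g$ against the $q$ hyperplanes in general position:
\[
(q-n-1)\,T_{f}(r)\le\sum_{j=1}^{q}N^{[n]}_{f}(r,H_{j})+o\big(T_{f}(r)\big),\qquad (q-n-1)\,T_{g}(r)\le\sum_{j=1}^{q}N^{[n]}_{g}(r,H_{j})+o\big(T_{g}(r)\big),
\]
valid outside a set of finite Lebesgue measure. The hard part will be to reconcile these with the estimate above: the latter controls only the level-$1$ truncated counting, whereas the Second Main Theorem sees the level-$n$ truncated counting, and the crude inequality $N^{[n]}_{f}(r,H_{j})\le n\,N^{[1]}_{f}(r,H_{j})$ loses a factor $n$ and only reaches the weaker bound $q\ge 5n+2$. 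To obtain the sharp $q\ge 3n+2$ one must show that the excess multiplicities at the points of $f^{-1}(H_{j})=g^{-1}(H_{j})$ are themselves absorbed, and for this I would exploit the sharing together with the $\ge 4$ equivalence classes. For indices $i,j$ in distinct classes the meromorphic function
\[
W_{ij}:=\frac{(f,H_{i})}{(f,H_{j})}-\frac{(g,H_{i})}{(g,H_{j})}=\frac{(f,H_{i})(g,H_{j})-(f,H_{j})(g,H_{i})}{(f,H_{j})(g,H_{j})}
\]
is not identically zero, satisfies $T(r,W_{ij})\le T_{f}(r)+T_{g}(r)+O(1)$, and — because $f=g$ on each $f^{-1}(H_{m})$ — vanishes on every $f^{-1}(H_{m})$ with $m\ne i,j$; together with the within-class identities $(f,H_{i})/(f,H_{j})\equiv(g,H_{i})/(g,H_{j})$ this provides additional rigidity. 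Matching, for each fixed $m$, the orders of vanishing along $f^{-1}(H_{m})$ of suitable $W_{ij}$'s (with $i,j$ chosen outside the class of $m$ and in two distinct classes) and of $h_{m}$ against the contributions to $N^{[n]}_{f}(r,H_{m})+N^{[n]}_{g}(r,H_{m})$, and using that $q\ge 3n+2$ leaves several hyperplanes to spare, a careful bookkeeping should yield
\[
\sum_{j=1}^{q}\big(N^{[n]}_{f}(r,H_{j})+N^{[n]}_{g}(r,H_{j})\big)<(q-n-1)\big(T_{f}(r)+T_{g}(r)\big)+o\big(T_{f}(r)+T_{g}(r)\big),
\]
contradicting the sum of the two Second Main Theorem inequalities. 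This contradiction shows $f\equiv g$. (Alternatively, in place of the $W_{ij}$ one could build Fujimoto-type auxiliary functions from logarithmic derivatives of the ratios $(f,H_{i})/(f,H_{j})$ and $(g,H_{i})/(g,H_{j})$, which handle the truncation more transparently; the decisive difficulty is the same.)
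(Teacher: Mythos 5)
Your opening moves are exactly right, and they are in fact the whole proof: this is the same skeleton the paper itself uses for its generalization (the proof of Theorem \ref{T1}), namely take $F:=f_{p_0}g_{q_0}-f_{q_0}g_{p_0}\not\equiv 0$, use (a), (b), (c) to get $\sum_{j=1}^q N^{[1]}_f(r,H_j)\le N(r,1/F)\le T_f(r)+T_g(r)+O(1)$ together with the same bound for $g$, and then invoke Cartan's Second Main Theorem with truncation level $n$. The genuine problem is what you do next. Your claim that the crude inequality $N^{[n]}_f(r,H_j)\le n\,N^{[1]}_f(r,H_j)$ ``only reaches the weaker bound $q\ge 5n+2$'' is an arithmetic error: it reaches $3n+2$ on the nose. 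Indeed, Cartan gives $(q-n-1)T_f(r)\le\sum_j N^{[n]}_f(r,H_j)+o(T_f(r))\le n\sum_j N^{[1]}_f(r,H_j)+o(T_f(r))\le n\bigl(T_f(r)+T_g(r)\bigr)+o(T_f(r))$, and likewise $(q-n-1)T_g(r)\le n\bigl(T_f(r)+T_g(r)\bigr)+o(T_g(r))$; adding the two yields $(q-n-1)\bigl(T_f(r)+T_g(r)\bigr)\le 2n\bigl(T_f(r)+T_g(r)\bigr)+o\bigl(T_f(r)+T_g(r)\bigr)$, hence $q\le 3n+1$, contradicting $q\ge 3n+2$. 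That already finishes the proof; no equivalence classes and no auxiliary functions $W_{ij}$ are needed.

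Second, and more seriously as a matter of logical completeness: because you believed the direct route fails, you replaced its final step by the equivalence-class/$W_{ij}$ machinery, and that part of your text is not a proof. The decisive inequality $\sum_j\bigl(N^{[n]}_f(r,H_j)+N^{[n]}_g(r,H_j)\bigr)<(q-n-1)\bigl(T_f(r)+T_g(r)\bigr)+o\bigl(T_f(r)+T_g(r)\bigr)$ is only asserted (``a careful bookkeeping should yield''), with no actual accounting of how the excess multiplicities along $f^{-1}(H_m)$ are absorbed by the zeros of the $W_{ij}$, nor of how the within-class identities are exploited. That bookkeeping is precisely the hard content of the sharper results in the literature (e.g.\ the $2n+3$ theorem of Chen and Yan mentioned in the paper's remarks) and cannot be waved through. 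So: keep your first half, delete the second, and redo the arithmetic as above.
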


It is natural to extend this result to meromorphic mappings of $\mathbb{C}^{m}$ into $\mathbb{P}^{n}(\mathbb{C})$ sharing hypersurfaces. As far as we known, Dulock and Ru firstly studied this  topic and obtained the following theorem.\par

\begin{theorem} \cite{dulock-ru} \label{TC} Let ${\{Q_{j}}\}_{j=1}^{q}$ be hypersurfaces of degree $\deg Q_{j}=d_{j}$ in $\mathbb{P}^{n}(\mathbb{C})$ in general position.  Let $d_{0}=\min\{d_{1}, \ldots, d_{q}\},$ $d$ be the least common multiple of $d_{j}$'s, i. e., $d=lcm(d_{1}, \ldots, d_{q}),$ and let $M=2d[2^{n-1}(n+1)nd(d+1)]^{n}.$ Suppose that $f, g$ are algebraically nondegenerate meromorphic mappings of $\mathbb{C}^{m}$ into $\mathbb{P}^{n}(\mathbb{C})$ such that \\
$(a) \dim (f^{-1}(Q_{i})\cap f^{-1} (Q_{j}))\leq m-2$ for all $1\leq i<j\leq q,$ \\
$(b) f(z)=g(z)$ on $\cup_{j=1}^{q}(f^{-1}(Q_{j})\cup g^{-1}(Q_{j})).$\\  If
$$q>(n+1)+\frac{2M}{d_{0}}+\frac{1}{2},$$
then $f=g.$\end{theorem}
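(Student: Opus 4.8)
The plan is to argue by contradiction, playing the truncated Second Main Theorem for algebraically nondegenerate maps against an elementary divisor inequality that comes out of the coincidence set. Suppose $f\not\equiv g$ and fix reduced representations $f=(f_0:\cdots:f_n)$ and $g=(g_0:\cdots:g_n)$. If every $2\times 2$ minor $f_kg_l-f_lg_k$ vanished identically, the two (reduced) representations would be proportional at every point, i.e.\ $f\equiv g$; so after reindexing the homogeneous coordinates we may assume $h:=f_0g_1-f_1g_0\not\equiv 0$. By (b) we have $f=g$ on $\bigcup_{j}(f^{-1}(Q_j)\cup g^{-1}(Q_j))$; in particular, if $z\in f^{-1}(Q_j)$ then $f(z)=g(z)\in Q_j$, so $f^{-1}(Q_j)\subseteq g^{-1}(Q_j)$, and symmetrically these sets coincide. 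At such a $z$ the vectors $(f_i(z))$ and $(g_i(z))$ are proportional, hence $h(z)=0$; thus $h$ vanishes on $\bigcup_j f^{-1}(Q_j)=\bigcup_j g^{-1}(Q_j)$.

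Next I bound the zero divisor of $h$. By hypothesis (a) the sets $f^{-1}(Q_i)$ and $f^{-1}(Q_j)$ with $i\neq j$ meet in dimension $\le m-2$, so their pure $(m-1)$-dimensional parts are pairwise disjoint; hence, outside an analytic subset of dimension $\le m-2$,
$$\nu_h(z)\ \ge\ \sum_{j=1}^{q}\min\{1,\nu_{f^{*}Q_j}(z)\}.$$
Integrating over spheres gives $\sum_{j=1}^{q}N_f^{[1]}(r,Q_j)\le N_h(r)$, and since $h$ vanishes on each $g^{-1}(Q_j)=f^{-1}(Q_j)$ as well, the same argument yields $\sum_{j=1}^{q}N_g^{[1]}(r,Q_j)\le N_h(r)$. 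On the other hand $|h|\le\|f\|\,\|g\|$ pointwise and $h$ is entire, so $N_h(r)\le T_h(r)+O(1)\le T_f(r)+T_g(r)+O(1)$.

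The essential ingredient is the truncated Second Main Theorem: for an algebraically nondegenerate meromorphic mapping $F$ of $\mathbb{C}^m$ into $\mathbb{P}^n(\mathbb{C})$ and hypersurfaces $Q_1,\dots,Q_q$ in general position, for every $\varepsilon>0$ and all $r$ outside a set of finite Lebesgue measure,
$$(q-n-1-\varepsilon)\,T_F(r)\ \le\ \sum_{j=1}^{q}\frac{1}{d_j}\,N_F^{[M]}(r,Q_j)+o(T_F(r)),$$
with truncation level $M=2d[2^{n-1}(n+1)nd(d+1)]^{n}$ as in the statement; this follows from Ru's Second Main Theorem by composing with the Veronese embedding $\Phi_d\colon\mathbb{P}^n\hookrightarrow\mathbb{P}^N$ (which turns the $Q_j$ into hyperplanes) and running the filtration estimate that produces exactly this $M$. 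Applying it to $F=f$, using $N_f^{[M]}(r,Q_j)\le M\,N_f^{[1]}(r,Q_j)$ together with $d_j\ge d_0$ and the previous paragraph,
$$(q-n-1-\varepsilon)\,T_f(r)\ \le\ \frac{M}{d_0}\sum_{j=1}^{q}N_f^{[1]}(r,Q_j)+o(T_f(r))\ \le\ \frac{M}{d_0}\,N_h(r)+o(T_f(r)),$$
and the analogous inequality holds for $g$. Adding the two and invoking $N_h(r)\le T_f(r)+T_g(r)+O(1)$ gives
$$\Big(q-n-1-\frac{2M}{d_0}-\varepsilon\Big)\big(T_f(r)+T_g(r)\big)\ \le\ o\big(T_f(r)+T_g(r)\big)+O(1).$$
Since $d_0\mid d\mid M$, the number $2M/d_0$ is an integer, so the hypothesis $q>(n+1)+\tfrac{2M}{d_0}+\tfrac12$ says exactly that $q-n-1-\tfrac{2M}{d_0}\ge 1$; choosing $\varepsilon<1$, the coefficient on the left is a positive constant, contradicting $T_f(r)+T_g(r)\to\infty$. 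Hence $f\equiv g$.

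I expect the main obstacle to be the truncated Second Main Theorem with the explicit truncation level $M$: controlling the truncation in Ru's theorem (equivalently, in the filtration estimate applied after the Veronese embedding) is the genuine work, and along the way one must handle the exceptional sets of finite measure and use hypothesis (a) carefully to justify the additivity of multiplicities in the estimate for $\nu_h$. The remaining steps are routine first-main-theorem bookkeeping, the harmless $\tfrac12$ serving only to put the integer inequality $q\ge n+2+2M/d_0$ in closed form.
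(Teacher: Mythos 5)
The paper itself gives no proof of Theorem \ref{TC} --- it is quoted from Dulock--Ru --- but your argument follows exactly the scheme the paper uses for its own Theorems \ref{T1} and \ref{T2}: assume $f\neq g$, take a nonvanishing minor $H=f_sg_t-f_tg_s$, bound $N_H(r)$ below by $\sum_j N^{[1]}_{Q_j(f),\cdot}(r)$ using (a) and (b) and above by $T_f(r)+T_g(r)$ via Jensen, and close the loop with a truncated Second Main Theorem. The bookkeeping is correct; the only substantive point, which you rightly flag, is the black-boxed Second Main Theorem with the explicit truncation level $M$ (the An--Phuong/Ru estimate), where one should check whether $M$ is independent of the $\varepsilon$ you carry (in the source it is tied to a fixed choice of $\varepsilon$, which is also where the $\tfrac12$ in the hypothesis actually comes from, rather than from your integrality argument --- though either reading makes the final step work).
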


The number of hypersurfaces in the result of Dulock-Ru's result is too big. In \cite{quang-2}, Quang improved Theorem \ref{TC} by firstly giving the uniqueness theorems for meromorphic mappings sharing hypersurfaces with an accepted number of hypersurfaces. However, the number of hypersurfaces in \cite{quang-2} is still very large according to Nevanlinna's five-value theorem, and the Fujimoto's uniqueness theorem and Theorem \ref{TA}.\par

Let $V$ be a complex projective subvariety of $\mathbb{P}^{n}(\mathbb{C})$ of dimension $k$ $(k\leq n).$ Let $d$ be a positive integer. We denote by $I(V)$  the ideal of homogeneous polynomials in $\mathbb{C}[x_{0}, \ldots, x_{n}]$ defining $V,$ $H_{d}$ the vector space consisting of all homogeneous polynomials in $\mathbb{C}[x_{0}, \ldots, x_{n}]$ of degree $d.$ Define $$I_{d}(V):=\frac{H_{d}}{I(V)\cap H_{d}}\quad \mbox{and}\quad H_{V}(d):=\dim I_{d}(V).$$ Then $H_{V}(d)$ is called Hilbert function of $V.$ Each element of $I_{V}(d)$ which is an equivalent class of an element $Q\in H_{d},$ will denoted by $[Q].$\par

Let $f$ be a meromorphic mapping of $\mathbb{C}^{m}$ into $V.$ We say that $f$ is degenerate over $I_{d}(V)$ if there is $[Q]\in I_{d}(V)\setminus\{0\}$ so that $Q(f)\equiv 0,$ otherwise we say that $f$ is nondegenerate  over $I_{d}(V).$ This implies that if $f$ is algebraically nondegenerate then $f$ is nondegenerate over $I_{d}(V)$ for every $d\geq 1.$\par

The family of hypersurfaces $\{Q_{j}\}_{j=1}^{q}$ is said to be in $N-$subgeneral position with respect to $V$ if for any $1\leq i_{1}<\cdots<i_{N+1},$
$$V\cap(\cap_{j=1}^{N+1}Q_{i_{j}})=\emptyset.$$ If $\{Q_{j}\}_{j=1}^{q}$ is said to be in $n-$subgeneral position with respect to $V,$ then we say that it is in general position with respect to $V.$\par

Recently, \cite{quang-an-2014} improved Theorem \ref{TC}, the uniqueness theorems in \cite{quang-2, phuong-1}, and obtained the following uniqueness theorem for meromorphic mappings which share hypersurfaces of $\mathbb{P}^{n}(\mathbb{C})$ without counting multiplicity. Now, the number of hypersurfaces is very small because Quang and An's result implies that $q\geq 3n + 2$ in the case of meromorphic mapping sharing hyperplanes in general position, and thus it generalizes Theorem \ref{TA}.\par

\begin{theorem} \cite{quang-an-2014} \label{TD} Let $V$ be a complex projective subvariety of $\mathbb{P}^{n}(\mathbb{C})$ of dimension $k$ $(k\leq n).$ Let
${\{Q_{j}}\}_{j=1}^{q}$ be hypersurfaces in $\mathbb{P}^{n}(\mathbb{C})$ in $N-$subgeneral position with respective to $V,$ $\deg Q_{j}=d_{j}$ $(1\leq j\leq q).$ Let $d$ be the least common multiple of $d_{j}$'s, i. e., $d=lcm(d_{1}, \ldots, d_{q}).$ Let $f, g$ be meromorphic mappings of $\mathbb{C}^{m}$ into $V$ which are nondegenerate over $I_{d}(V).$ Assume that\\
$(a) \dim (f^{-1}(Q_{i})\cap f^{-1} (Q_{j}))\leq m-2$ for all $1\leq i<j\leq q,$ \\
$(b) f(z)=g(z)$ on $\cup_{j=1}^{q}(f^{-1}(Q_{j})\cup g^{-1}(Q_{j})).$\\  If
$$q>\frac{2(H_{V}(d)-1)}{d}+\frac{(2N-k+1)H_{V}(d)}{k+1},$$
then $f=g.$\end{theorem}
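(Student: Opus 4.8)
The plan is to run a Smiley--Fujimoto style argument, but with an auxiliary function that is \emph{bilinear} in the two mappings; this bilinearity is exactly what produces the factor $\tfrac1d$ in the bound and makes the estimate sharp (it recovers $q\ge 3n+2$ of Theorem~\ref{TB} in the hyperplane case $V=\mathbb{P}^{n}(\mathbb{C})$, $d_1=\cdots=d_q=1$). First I would reduce to the case $d_1=\cdots=d_q=d$ by replacing each $Q_j$ with $Q_j^{d/d_j}$: this changes neither the sets $f^{-1}(Q_j)$, $g^{-1}(Q_j)$, nor hypothesis (a), nor the $N$-subgeneral position with respect to $V$, nor the nondegeneracy of $f,g$ over $I_d(V)$. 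Fix reduced representations $\tilde f=(f_0,\dots,f_n)$ and $\tilde g=(g_0,\dots,g_n)$, and suppose for contradiction that $f\not\equiv g$. Then $\tilde f$ and $\tilde g$ are not everywhere proportional, so there are indices $s\ne t$ with $\Psi:=f_sg_t-f_tg_s\not\equiv 0$. The key observation is that $\Psi$ is an entire function, homogeneous of degree $1$ in $\tilde f$ and of degree $1$ in $\tilde g$; since $|\Psi|\le\|\tilde f\|\,\|\tilde g\|$, one gets $T(r,\Psi)\le T(r,f)+T(r,g)+O(1)$.

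Next I would show $\Psi$ vanishes on the entire shared set. By (b), $f=g$ on $\bigcup_{j=1}^q(f^{-1}(Q_j)\cup g^{-1}(Q_j))$; in particular $z\in f^{-1}(Q_j)$ forces $Q_j(g(z))=Q_j(f(z))=0$, so $f^{-1}(Q_j)=g^{-1}(Q_j)$ as analytic sets, while on this set $\tilde f=\lambda\tilde g$ for a nowhere-zero holomorphic $\lambda$, whence $\Psi=0$ there. Thus $\Psi$ vanishes identically on each irreducible component of $\bigcup_{j=1}^q f^{-1}(Q_j)$. By (a), the divisors $\min\{\nu_{Q_j(\tilde f)},1\}$ have mutually disjoint supports outside an analytic set of codimension $\ge 2$, so $\sum_{j=1}^q\min\{\nu_{Q_j(\tilde f)},1\}\le\min\{\nu_\Psi,1\}$ as divisors; integrating and applying Jensen's formula to the entire function $\Psi$,
$$\sum_{j=1}^q N^{[1]}(r,\nu_{Q_j(\tilde f)})\ \le\ N(r,\nu_\Psi)\ \le\ T(r,f)+T(r,g)+O(1),$$
and the same estimate holds with $\tilde f$ replaced by $\tilde g$ since $f^{-1}(Q_j)=g^{-1}(Q_j)$.

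Finally I would feed this into the second main theorem of Quang~\cite{quang-2} for maps nondegenerate over $I_d(V)$: applied to $f$ with the degree-$d$ hypersurfaces $Q_1,\dots,Q_q$ in $N$-subgeneral position with respect to $V$, it gives, for all $r$ outside a set of finite Lebesgue measure,
$$\Big(q-\frac{(2N-k+1)H_V(d)}{k+1}\Big)T(r,f)\ \le\ \frac1d\sum_{j=1}^q N^{[H_V(d)-1]}(r,\nu_{Q_j(\tilde f)})+o(T(r,f)).$$
Bounding $N^{[H_V(d)-1]}\le (H_V(d)-1)N^{[1]}$ and inserting the previous estimate,
$$\Big(q-\frac{(2N-k+1)H_V(d)}{k+1}\Big)T(r,f)\ \le\ \frac{H_V(d)-1}{d}\big(T(r,f)+T(r,g)\big)+o(T(r,f)).$$
Adding the analogous inequality for $g$ and dividing by $T(r,f)+T(r,g)\to\infty$ (both maps are nonconstant when $\dim V\ge 1$, the only case needing proof) yields $q-\frac{(2N-k+1)H_V(d)}{k+1}\le\frac{2(H_V(d)-1)}{d}$, contradicting the hypothesis. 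Hence $f\equiv g$.

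The step I expect to be the main obstacle is the coupling between the truncation level in Quang's second main theorem and the choice of auxiliary function. One must invoke that theorem in precisely the form above --- truncation level $H_V(d)-1$ with the normalizing factor $1/d$ --- and, crucially, use the bidegree-$(1,1)$ determinant $f_sg_t-f_tg_s$ rather than the seemingly more natural $Q_i(\tilde f)Q_j(\tilde g)-Q_j(\tilde f)Q_i(\tilde g)$: the latter has bidegree $(d,d)$, so the resulting estimate only yields $q>2(H_V(d)-1)+\tfrac{(2N-k+1)H_V(d)}{k+1}$, which is strictly weaker. Everything else --- tracking the codimension-$\ge2$ exceptional sets when comparing divisors, and the routine handling of the $o(T(r,f))$ term off a set of finite measure --- is standard.
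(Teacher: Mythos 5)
Your proposal is correct and follows essentially the same route as the paper: the paper establishes Theorem \ref{TD} as the special case $m_{1}=\cdots=m_{q}=\infty$ of Theorem \ref{T1}, whose proof uses the same bidegree-$(1,1)$ auxiliary function $H=f_{s}g_{t}-f_{t}g_{s}$, the same reduction to the common-degree hypersurfaces $Q_{j}^{d/d_{j}}$, the same truncation bound $N^{[H_{V}(d)-1]}\leq(H_{V}(d)-1)N^{[1]}$, and Quang's second main theorem (Theorem \ref{T21}), arriving at the identical contradiction. The only cosmetic difference is that the paper routes the estimate through Lemma \ref{L1} (which carries deficiency and multiplicity terms that all vanish when $m_{j}=\infty$), whereas you apply the second main theorem directly.
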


Quang \cite{quang-3} also proposed a uniqueness theorem for meromorphic mappings sharing slowly moving hypersurfaces without counting multiplicity, which improved the uniqueness result due to Dethloff and Tan \cite{dethloff-tan-2} and \cite{phuong}.\par

\begin{theorem}\cite{quang-3} \label{TE} Let $f$ and $g$ be nonconstant meromorphic mappings of $\mathbb{C}^{m}$ into $\mathbb{P}^{n}(\mathbb{C}).$ Let
${\{Q_{j}}\}_{j=1}^{q}$ be set of slowly (with respect to $f$ and $g$) moving hypersurfaces in $\mathbb{P}^{n}(\mathbb{C})$ in weakly general position with degree $\deg Q_{j}=d_{j}.$ Put $d$ be the least common multiple of $d_{1}, \ldots, d_{n+2}$, i. e., $d=lcm(d_{1}, \ldots, d_{n+2}),$ and $N=\left(  \begin{array}{c} n+d \\ n \\  \end{array} \right)-1.$ Let $k$ $(1\leq k\leq n)$ be an integer. Assume that \\
$(a) \dim (\cap_{j=0}^{k}f^{-1}(Q_{i_{j}}))\leq m-2$ for every $1\leq i_{0}<\cdots <i_k\leq q,$ \\
$(b) f(z)=g(z)$ on $\cup_{j=1}^{q}(f^{-1}(Q_{j})\cup g^{-1}(Q_{j})).$\\ Then the following assertions hold:\\
(i) If $q>\frac{2kN(nN+n+1)}{d}$ then $f=g.$\\
(ii)In addition to the assumptions (a) and (b), we assume further that both $f$ and $g$ are algebraically nondegenerate over $\tilde{\mathcal{K}}_{\{Q_{j}\}_{j=1}^{q}}.$ If
$q>\frac{2kN(N+2)}{d}$ then $f=g.$
\end{theorem}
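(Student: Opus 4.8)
\emph{Step 1: linearize the hypersurfaces.} I would argue by contradiction, supposing $f\not\equiv g$, and begin by setting $\hat Q_j:=Q_j^{d/d_j}$ for $j=1,\dots,n+2$ and composing $f,g$ with the Veronese embedding $\nu_d\colon\mathbb{P}^n(\mathbb{C})\to\mathbb{P}^N(\mathbb{C})$, $N=\binom{n+d}{n}-1$. This produces nonconstant mappings $F=\nu_d\circ f$, $G=\nu_d\circ g$ into $\mathbb{P}^N(\mathbb{C})$ and slow (with respect to $F,G$) moving hyperplanes $H_1,\dots,H_{n+2}$ with $H_j(\nu_d(x))=\hat Q_j(x)$, so that $F^{-1}(H_j)=f^{-1}(Q_j)$, $T_F(r)=d\,T_f(r)+O(1)$, $T_G(r)=d\,T_g(r)+O(1)$, the weak general position and hypothesis (a) pass to the $H_j$, and hypothesis (b) says $F=G$ on $\bigcup_{j=1}^q(f^{-1}(Q_j)\cup g^{-1}(Q_j))$. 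The remaining $Q_j$ $(j>n+2)$ enter only through their zero divisors. If $F$ or $G$ is linearly degenerate I would restrict to the smallest projective subspace containing its image, which lowers $N$ and thus only strengthens the conclusion; so I assume $F,G$ linearly nondegenerate (in case (ii), the algebraic nondegeneracy of $f,g$ over $\tilde{\mathcal{K}}_{\{Q_{j}\}_{j=1}^{q}}$ makes $F,G$ nondegenerate over the corresponding field of small functions).

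\emph{Step 2: build an auxiliary function and a divisor estimate.} I would then fix reduced representations $\tilde F,\tilde G$ and, for $i\ne l$ in $\{1,\dots,n+2\}$, put $P_{il}:=H_i(\tilde F)\,H_l(\tilde G)-H_l(\tilde F)\,H_i(\tilde G)$, which becomes holomorphic once the slow denominators of the coefficients are cleared — a step that costs only $o(T_f(r)+T_g(r))$ throughout. If $P_{il}\equiv0$ for all such $i,l$, then $F$ and $G$ have the same image under $[H_1(\tilde F):\cdots:H_{n+2}(\tilde F)]$; using hypothesis (b) and the fact that the image of $f$ meets some $Q_j$, one then forces $F\equiv G$, i.e. $f\equiv g$, contradicting our assumption — so I may fix a pair with $P:=P_{i_0l_0}\not\equiv0$. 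Off the indeterminacy loci of $F,G$ and the codimension-$\ge2$ intersection sets supplied by (a) (all negligible for counting functions), $P$ vanishes on all of $\bigcup_{j=1}^q f^{-1}(Q_j)$: at a point of $f^{-1}(Q_{i_0})$, hence of $g^{-1}(Q_{i_0})$ by (b), both $H_{i_0}(\tilde F)$ and $H_{i_0}(\tilde G)$ vanish, so both monomials of $P$ do — and likewise for $f^{-1}(Q_{l_0})$ — while at a generic point of any other $f^{-1}(Q_j)$ one has $f=g$ by (b), and $H_{i_0}(\tilde F),H_{l_0}(\tilde F)$ being nonzero there, the ratio $H_{i_0}(\tilde F)/H_{l_0}(\tilde F)=Q_{i_0}(f)^{d/d_{i_0}}/Q_{l_0}(f)^{d/d_{l_0}}$ agrees with the same expression for $g$, forcing $P=0$. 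Since (a) also forces at most $k$ of the $Q_j$ to pass through a generic point, passing to counting functions and using the First Main Theorem for the linear $H_{i_0},H_{l_0}$ yields
$$\sum_{j=1}^q\overline N_{Q_j(f)}(r)\ \le\ k\,N_P(r)+o\big(T_f(r)+T_g(r)\big)\ \le\ kd\big(T_f(r)+T_g(r)\big)+o\big(T_f(r)+T_g(r)\big),$$
and likewise with $f$ and $g$ interchanged.

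\emph{Step 3: apply a Second Main Theorem.} Finally I would invoke, in case (i), the general Second Main Theorem for an arbitrary nonconstant meromorphic mapping and slow moving hypersurfaces in weakly general position — valid with no nondegeneracy hypothesis, at the price of a constant of order $n$ — and, in case (ii), Quang's sharper Second Main Theorem for mappings algebraically nondegenerate over $\tilde{\mathcal{K}}$. Either one gives a lower bound for $\sum_{j=1}^q\overline N_{Q_j(f)}(r)$ of the shape $\text{(constant)}\cdot q\,T_f(r)-o(T_f(r))$, once the truncated counting functions on its right-hand side have been bounded by the truncated-to-one $\overline N_{Q_j(f)}(r)$ and the truncation levels carried along. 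Combining this with the upper bound of Step 2, with the symmetric inequality for $g$, and then adding, produces an inequality in $T_f(r)+T_g(r)$ that cannot hold as $r\to\infty$ outside a set of finite Lebesgue measure once $q>\frac{2kN(nN+n+1)}{d}$ in case (i), respectively once $q>\frac{2kN(N+2)}{d}$ in case (ii). This contradiction proves $f\equiv g$.

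\emph{Where the difficulty lies.} The Nevanlinna bookkeeping is routine, provided the ``slow'' hypothesis is used to absorb the coefficient characteristics, the cleared denominators, and the error terms into $o(T_f+T_g)$, and provided hypothesis (a) is used both to discard the codimension-$\ge2$ sets and to supply the ``at most $k$ hypersurfaces through a generic point'' bound. I expect two genuinely delicate points. The first is the degenerate alternative $P_{il}\equiv0$ for all $i,l$: weak general position of the $Q_j$ by itself does \emph{not} make $[\hat Q_1:\cdots:\hat Q_{n+2}]$ an embedding of $\mathbb{P}^n(\mathbb{C})$, so hypothesis (b) has to be genuinely exploited to rule this case out. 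The second is the numerology: the exact truncation levels inside the two moving-hypersurface Second Main Theorems, together with the factors $k$ and $d$ coming from weak general position, the degrees, and the Veronese step, must be tracked precisely so that the two thresholds come out to be $\frac{2kN(nN+n+1)}{d}$ and $\frac{2kN(N+2)}{d}$. The dichotomy between (i) and (ii) enters only through which Second Main Theorem is available, and establishing its sharp form in each case — especially Quang's nondegenerate version — is itself the substantial input.
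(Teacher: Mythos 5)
Your overall architecture---an auxiliary holomorphic function that vanishes on the shared preimage set, bounded above by $T_{f}(r)+T_{g}(r)$ via Jensen and played off against the moving-hypersurface Second Main Theorem, with the factor $k$ coming from hypothesis (a)---is exactly the paper's (Theorem \ref{TE} is derived there as the case $m_{1}=\cdots=m_{q}=\infty$ of Theorem \ref{T2}). But your choice of auxiliary function leaves a genuine gap. You take $P_{il}=H_{i}(\tilde F)H_{l}(\tilde G)-H_{l}(\tilde F)H_{i}(\tilde G)$, built from \emph{hypersurface values}, and you concede that if $P_{il}\equiv 0$ for all $i,l$ you only learn that $[\hat Q_{1}(f):\cdots:\hat Q_{n+2}(f)]$ and $[\hat Q_{1}(g):\cdots:\hat Q_{n+2}(g)]$ agree, which weakly general position does not upgrade to $f=g$; your proposed escape ``using hypothesis (b)'' is not an argument, since (b) constrains $f$ and $g$ only on the thin set $\cup_{j}(f^{-1}(Q_{j})\cup g^{-1}(Q_{j}))$. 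The paper sidesteps this entirely by taking $H=f_{s}g_{t}-f_{t}g_{s}$ from the \emph{coordinates} of the reduced representations: $f\neq g$ immediately produces a pair $(s,t)$ with $H\not\equiv 0$, $H$ is entire with no coefficient denominators to clear, and (b) still forces $H=0$ on the shared set with $\nu_{H}\geq\frac{1}{k}\sum_{j}\min\{\nu_{Q_{j}(f)},1\}$ off the codimension-two set supplied by (a).

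The second problem is quantitative and would sink the stated threshold even if the degenerate case were handled. Your $P$ has degree $d$ in each of $f$ and $g$, so Jensen only gives $N_{P}(r)\leq d\,(T_{f}(r)+T_{g}(r))+o(\cdot)$, hence $\sum_{j}N^{[1]}_{Q_{j}(f)}(r)\leq kd\,(T_{f}+T_{g})+o(\cdot)$; pushing this through the Second Main Theorem (which must be applied to the common-degree-$d$ powers $Q_{j}^{d/d_{j}}$ so that the weights $1/d_{j}$ become $1/d$, giving $\frac{qd}{p}T_{f}(r)\leq N\sum_{j}N^{[1]}_{Q_{j}(f)}(r)+o(T_{f}(r))$ with $p=nN+n+1$ or $N+2$) yields a contradiction only when $q>2kNp$, a factor of $d$ worse than the claimed $q>\frac{2kNp}{d}$. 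In the paper's computation the upper bound is $\sum_{j}N^{[1]}_{Q_{j}(f)}(r)\leq kN_{H}(r)\leq k(T_{f}+T_{g})$ with no factor of $d$, precisely because $H$ is bilinear in the coordinates; the $d$ in the denominator of the threshold comes solely from the $1/d$ weights on the left. Your Veronese linearization is also unneeded overhead: the paper invokes the moving-hypersurface Second Main Theorem (Theorem \ref{T22}) directly, whose part (i) requires no nondegeneracy hypothesis, so no reduction to a smaller linear span is ever necessary.
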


It is interesting to considering multiple values and uniqueness problem for meromorphic mappings. For example, H. X. Yi
(\cite[Theorem 3.15]{yi-yang}) adopted the method of dealing with
multiple values due to L. Yang \cite{yang} and obtained a uniqueness
theorem of meromorphic functions of one variable, which generalized the famous Nevanlinna's five-value theorem:

\begin{theorem}\cite[Theorem 3.15]{yi-yang} \label{T-0} Let $f$ and
$g$ be two nonconstant meromorphic functions on $\mathbb{C},$
let $a_{j}$ $(j=1,2,\ldots,q)$ be $q$ distinct complex elements in
$\mathbb{P}^{1}(\mathbb{C})$ and take
$m_{j}\in\mathbb{Z}^{+}\cup\{\infty\}$ $(j=1,2,\ldots,q)$ satisfying
$m_{1}\geq m_{2}\geq\cdots\geq m_{q}$ and $\nu_{f-a_{j}, \leq
m_{j}}^{1}=\nu_{g-a_{j}, \leq m_{j}}^{1}$ $(j=1,2,\ldots,q).$ If
$\sum_{j=3}^{q}\frac{m_{j}}{m_{j}+1}>2,$ then $f(z)\equiv
g(z).$\end{theorem}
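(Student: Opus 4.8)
The plan is to follow the classical Nevanlinna–Yang–Yi strategy for the five-value-type theorems, adapted to handle truncated and multiplicity-capped counting functions. First I would set up the standing assumptions: without loss of generality $f\not\equiv g$ (arguing by contradiction), and reorder so $m_1\ge m_2\ge\cdots\ge m_q$. The central object is the comparison between the counting function of common preimages (with the prescribed multiplicity caps) and the characteristic functions $T(r,f)$ and $T(r,g)$. For each $a_j$, the hypothesis $\nu_{f-a_j,\le m_j}^1=\nu_{g-a_j,\le m_j}^1$ says that at every point where $f$ takes the value $a_j$ with multiplicity at most $m_j$, the function $g$ takes $a_j$ with multiplicity at most $m_j$ as well, and vice versa — in particular such a point is a zero of $f-g$. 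The key inequality to extract is a lower bound: summing over $j$, the $1$-truncated counting function $\overline N(r,\tfrac{1}{f-a_j})$ restricted to the "$\le m_j$" part contributes, via the standard trick $\overline N_{\le m}\ge \frac{m}{m+1}\bigl(\overline N - $ (the $>m$ part counted once)$\bigr)$ or more directly $N_{\le m}(r,\tfrac1{f-a_j})\ge \frac{?}{?}$, a total of at least roughly $\sum_j \frac{m_j}{m_j+1}T(r,f)$ up to $o(T(r,f))$, after invoking the Second Main Theorem for $f$ against the $q$ target values.

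The next step is the upper bound on $N(r,\tfrac{1}{f-g})$. Since $f-g$ is meromorphic and $f\not\equiv g$, we have $N(r,\tfrac1{f-g})\le T(r,f-g)+O(1)\le T(r,f)+T(r,g)+O(1)$. Combining: every common "capped" $a_j$-point is a zero of $f-g$, and the points coming from distinct $a_j$'s are distinct (the $a_j$ are distinct values), so
$$\sum_{j=1}^q N_{\le m_j}\!\left(r,\tfrac{1}{f-a_j}\right)\le N\!\left(r,\tfrac{1}{f-g}\right)\le T(r,f)+T(r,g)+O(1).$$
By symmetry the same holds with the roles of $f$ and $g$ interchanged. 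Now I would apply the SMT-based lower bound to each side. The SMT for $f$ with $q$ values gives $\sum_{j=1}^q \overline N(r,\tfrac1{f-a_j})\ge (q-2)T(r,f)+o(T(r,f))$, and the multiplicity-cap device converts this into $\sum_{j=1}^q N_{\le m_j}(r,\tfrac1{f-a_j})\ge \sum_{j=1}^q \frac{m_j}{m_j+1}\,\overline N(r,\tfrac1{f-a_j}) \ge \cdots$; the two smallest terms (indices $1,2$, where the cap is weakest since $m_1\ge m_2$) can be discarded to recover the "$q-2$" by using $\sum_{j=3}^q$ only. Carefully done, one gets $\sum_{j=1}^q N_{\le m_j}(r,\tfrac1{f-a_j})\ge \bigl(\sum_{j=3}^q\frac{m_j}{m_j+1}\bigr)T(r,f)+o(T(r,f))$ and likewise for $g$. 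Adding the $f$- and $g$-versions of the displayed inequality and rearranging yields
$$\Bigl(\sum_{j=3}^q\frac{m_j}{m_j+1}\Bigr)\bigl(T(r,f)+T(r,g)\bigr)\le 2\bigl(T(r,f)+T(r,g)\bigr)+o\bigl(T(r,f)+T(r,g)\bigr),$$
forcing $\sum_{j=3}^q\frac{m_j}{m_j+1}\le 2$, contrary to hypothesis. Hence $f\equiv g$.

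The step I expect to be the main obstacle is the bookkeeping in the multiplicity-cap lemma, i.e. making the passage from $\overline N$ (or $N$) to $N_{\le m_j}$ precise enough that the two discarded "bad" values are exactly the two with the largest $m_j$ (so that the residual sum is $\sum_{j\ge3}$, matching the statement's threshold), while simultaneously keeping the error terms of order $o(\max\{T(r,f),T(r,g)\})$ and ensuring they can be absorbed outside a set of finite measure. One must be slightly careful that a point may be an $a_i$-point of $f$ with small multiplicity but an $a_j$-point for no other $j$, so the zeros of $f-g$ contributed by different $a_j$ genuinely do not overlap; this is immediate from distinctness of the $a_j$ but should be stated. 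A secondary technical point is handling $m_j=\infty$ (then $\frac{m_j}{m_j+1}$ is read as $1$ and $\nu_{\le\infty}=\nu$, so no truncation at that value), which only makes the inequalities more favorable. Everything else — the Second Main Theorem, the first main theorem bound $N(r,1/h)\le T(r,h)+O(1)$, and Nevanlinna's logarithmic-derivative-free formulation — is standard one-variable Nevanlinna theory as in \cite{yi-yang}.
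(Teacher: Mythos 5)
Your overall architecture is the right one, and it is exactly the scheme this paper uses for its generalizations: assume $f\not\equiv g$, note that every capped shared $a_j$-point is a zero of $f-g$ and that distinct $a_j$ contribute disjoint sets of such points, deduce $\sum_{j=1}^{q}\overline N_{\le m_j}(r,\tfrac1{f-a_j})\le N(r,\tfrac1{f-g})\le T(r,f)+T(r,g)+O(1)$, and play this against a Second Main Theorem lower bound with a multiplicity cap (Lemma \ref{L1} together with Section 3, specialized to $V=\mathbb{P}^1$, $d=d_j=1$, $H_V(d)=2$, truncation level $1$, is precisely a proof of Theorem \ref{T-0}). However, the step you yourself flag as ``the main obstacle'' is where all the content of the theorem lives, and it is not done. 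The one explicit inequality you offer for it, $\overline N_{\le m_j}\ge\frac{m_j}{m_j+1}\,\overline N$, is false: if every $a_j$-point of $f$ has multiplicity greater than $m_j$, the left side vanishes while the right side does not. The correct cap is $\overline N_{>m_j}\le\frac1{m_j+1}N_{>m_j}\le\frac1{m_j+1}\bigl(N(r,\tfrac1{f-a_j})-\overline N_{\le m_j}(r,\tfrac1{f-a_j})\bigr)$, which gives $\overline N\le\frac{m_j}{m_j+1}\overline N_{\le m_j}+\frac1{m_j+1}\bigl(T(r,f)+O(1)\bigr)$. But if you then discard the coefficients $\frac{m_j}{m_j+1}\le 1$ and apply the displayed upper bound, you only reach $\sum_{j=1}^{q}\frac{m_j}{m_j+1}\le 4$, which is strictly weaker than the claimed threshold whenever $m_1$ or $m_2$ is finite; in particular your asserted intermediate bound $\sum_{j}\overline N_{\le m_j}\ge\bigl(\sum_{j\ge 3}\frac{m_j}{m_j+1}\bigr)T(r,f)+o(T(r,f))$ does not follow from this route.

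The missing idea is the coefficient uniformization carried out in Lemma \ref{L1}. After the refined cap, the SMT reads $\bigl(\sum_{j=1}^{q}\frac{m_j}{m_j+1}-2\bigr)T(r,f)\le\sum_{j=1}^{q}\frac{m_j}{m_j+1}\overline N_{\le m_j}(r,\tfrac1{f-a_j})+S(r,f)$. Since $m_j\le m_2$ for $j\ge 2$, each coefficient with $j\ge 2$ is at most $\frac{m_2}{m_2+1}$; for $j=1$ the surplus $\bigl(\frac1{m_2+1}-\frac1{m_1+1}\bigr)\overline N_{\le m_1}$ is absorbed into the left side using $\overline N_{\le m_1}\le T(r,f)+O(1)$. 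This yields
\[
\frac{m_2}{m_2+1}\sum_{j=1}^{q}\overline N_{\le m_j}\Bigl(r,\tfrac1{f-a_j}\Bigr)\ \ge\ \Bigl(\sum_{j=3}^{q}\frac{m_j}{m_j+1}-\frac{2}{m_2+1}\Bigr)T(r,f)+S(r,f),
\]
and adding the $f$- and $g$-versions against $\sum_j\overline N_{\le m_j}\le T(r,f)+T(r,g)+O(1)$ gives $\sum_{j=3}^{q}\frac{m_j}{m_j+1}\le\frac{2m_2}{m_2+1}+\frac{2}{m_2+1}=2$, the exact contradiction needed. It is the extra factor $\frac{m_2}{m_2+1}$ multiplying the $2(T_f+T_g)$ bound, combined with the absorbed $j=1$ surplus, that converts $\sum_{j=1}^{q}(\cdot)\le 4$ into $\sum_{j=3}^{q}(\cdot)\le 2$; without it the proof establishes a genuinely weaker statement. (A secondary point you should also settle: since the $a_j$ lie in $\mathbb{P}^1(\mathbb{C})$, one of them may be $\infty$, and a common pole is not a zero of $f-g$; either normalize by a M\"obius transformation or argue with the homogeneous expression $f_sg_t-f_tg_s$ as in Section 3.)
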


Later, Hu, Li and
Yang extended this result to meromorphic functions in several
variables (see \cite[Theorem 3.9]{hu-li-yang}). In 2000, Aihara\cite{aihara} generalized Theorem \ref{T-0} to the case of meromorphic mappings sharing hyperplanes from $\mathbb{C}^{m}$ into $\mathbb{P}^{n}(\mathbb{C}).$ Recently, Cao-Yi\cite{cao-yi}, L\"{u}\cite{lu}, Tu-Wang\cite{tu-wang}, Quang\cite{quang}, Cao-Liu-Cao\cite{cao-liu-cao} continued to investigate this topic.\par

The main purpose of this paper is to consider multiple values and uniqueness problem for meromorphic mappings sharing fixed or moving hypersurfaces from $\mathbb{C}^{m}$ into $\mathbb{P}^{n}(\mathbb{C}),$ and generalize Theorem \ref{TD} and Theorem \ref{TE} respectively as follows.\par

\begin{theorem} \label{T1} Let $V$ be a complex projective subvariety of $\mathbb{P}^{n}(\mathbb{C})$ of dimension $k$ $(k\leq n).$ Let
${\{Q_{j}}\}_{j=1}^{q}$ be hypersurfaces in $\mathbb{P}^{n}(\mathbb{C})$ in $N-$subgeneral position with respective to $V,$ $\deg Q_{j}=d_{j}$ $(1\leq j\leq q).$ Let $d$ be the least common multiple of $d_{j}$'s, i. e., $d=lcm(d_{1}, \ldots, d_{q}).$ Let $f, g$ be meromorphic mappings of $\mathbb{C}^{m}$ into $V$ which are nondegenerate over $I_{d}(V).$
Let $m_{j}(j=1,\cdots,q)$ be positive integers or $\infty$ with $m_{1}\geq\cdots\geq m_{q}\geq n.$ Assume that\\
$(a) \dim (f^{-1}(Q_{i})\cap f^{-1} (Q_{j}))\leq m-2$ for all $1\leq i<j\leq q,$ \\
$(b) f(z)=g(z)$ on $$\cup_{j=1}^{q}(\{z\in\mathbb{C}^{m}:0<\nu_{Q_{j}(f)}(z)\leq m_{j}\}\cup\{z\in\mathbb{C}^{m}:0<\nu_{Q_{j}(g)}(z)\leq m_{j}\}).$$ If
$$\sum_{j=3}^{q}\frac{m_{j}}{m_{j}+1}>q+\frac{d(2N-k+1)H_{V}(d)}{(k+1)(H_{V}(d)-1)}-\frac{qd}{H_{V}(d)-1}-\frac{4-2H_{V}(d)}{m_{2}+1},$$
then $f=g.$
\end{theorem}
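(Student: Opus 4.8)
The plan is to adapt the proof of Theorem \ref{TD} due to Quang, upgrading the counting functions so as to incorporate the truncation by the multiplicity thresholds $m_j$. First I would set up the standard machinery: choose reduced representations $f=(f_0:\cdots:f_n)$ and $g=(g_0:\cdots:g_n)$, and for each hypersurface $Q_j$ consider $Q_j(f)$ and $Q_j(g)$ as holomorphic functions. The key elementary observation, exactly as in the one-variable Theorem \ref{T-0}, is that hypothesis (b) forces the divisors to agree up to the threshold: on the set where $0<\nu_{Q_j(f)}\le m_j$ we have $f=g$, hence $\nu_{Q_j(f)}=\nu_{Q_j(g)}$ there, and symmetrically; combined with (a) which guarantees the relevant intersections have codimension $\ge 2$, this yields for each $j$ the pointwise bound
\begin{align*}
\min\{\nu_{Q_j(f)},1\}+\min\{\nu_{Q_j(g)},1\}\le \frac{2}{m_j+1}\max\{\nu_{Q_j(f)},\nu_{Q_j(g)}\}+\left(1+\frac{1}{m_j+1}\right)\min\{\nu_{Q_j(f)-Q_j(g)},1\}
\end{align*}
outside an analytic set of codimension $\ge2$ (the usual trick: if a common zero has multiplicity $>m_j$ on one side it contributes to the difference divisor; if $\le m_j$ it is a genuine shared zero of the same multiplicity). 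Integrating gives a Second-Main-Theorem-ready estimate relating $N^{[1]}_{Q_j(f)}+N^{[1]}_{Q_j(g)}$, the full counting functions, and $N_{Q_j(f)-Q_j(g)}$.

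Next I would run the Cartan–Nochka–type Second Main Theorem for hypersurfaces in $N$-subgeneral position with respect to $V$, in the form Quang uses (via the Hilbert function $H_V(d)$ and the Veronese-type embedding sending $f$ to the point with coordinates a basis of $I_d(V)$), applied to both $f$ and $g$. This produces two inequalities of the shape
\begin{align*}
\Big(q-\tfrac{(2N-k+1)H_V(d)}{k+1}\Big)T_f(r)\le \sum_{j=1}^q \tfrac{1}{d_j}N^{[H_V(d)-1]}_{Q_j(f)}(r)+o(T_f(r)),
\end{align*}
and similarly for $g$, where I have truncated at level $H_V(d)-1$. I would add these two, then feed in the pointwise divisor comparison from the first step. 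The term $N_{Q_j(f)-Q_j(g)}(r)$ is controlled by $T_f(r)+T_g(r)$ up to $O(1)$ since $Q_j(f)-Q_j(g)$ is a holomorphic function of bounded characteristic; this is where a genuine linear algebra / "the difference cannot vanish identically for too many $j$ unless $f\equiv g$" argument is needed — the standard device is that if $Q_j(f)\equiv Q_j(g)$ for $H_V(d)$ values of $j$ (equivalently, enough linearly independent degree-$d$ forms), then $f$ and $g$ agree after the Veronese embedding, hence $f=g$. So I would split the index set into $J_1=\{j:Q_j(f)\equiv Q_j(g)\}$ and its complement, handle $|J_1|$ large by direct conclusion, and for $j\notin J_1$ use $N_{Q_j(f)-Q_j(g)}(r)\le T_{Q_j(f)-Q_j(g)}(r)+O(1)\le d_j(T_f(r)+T_g(r))+o(\cdot)$.

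Assembling everything, the $\max$-term in the divisor bound produces coefficients $\tfrac{2}{d_j(m_j+1)}\cdot d_j = \tfrac{2}{m_j+1}$ against $T_f(r)+T_g(r)$; the ordinary counting functions are bounded by $T_f(r)$ or $T_g(r)$ via the First Main Theorem (giving the $q$ and $\tfrac{qd}{H_V(d)-1}$ type terms after rescaling by the truncation level $H_V(d)-1$ and $d_j\ge$ appropriate bounds); and the difference terms give the $\tfrac{4-2H_V(d)}{m_2+1}$ correction — this last, slightly mysterious, constant is exactly what one gets by being careful that the two largest thresholds $m_1,m_2$ can be treated more favorably (one discards the two "best" indices from the worst part of the estimate, which is why the sum in the hypothesis starts at $j=3$, matching the one-variable Theorem \ref{T-0}). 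Collecting the coefficient of $\big(T_f(r)+T_g(r)\big)$ on each side and requiring it to be negative yields precisely the stated inequality $\sum_{j=3}^q \tfrac{m_j}{m_j+1}>q+\cdots$, forcing $T_f(r)+T_g(r)=o(T_f(r)+T_g(r))$, a contradiction unless the difference terms already forced $f=g$.

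I expect the main obstacle to be the bookkeeping that isolates the correction term $-\tfrac{4-2H_V(d)}{m_2+1}$ and justifies starting the sum at $j=3$: this requires carefully choosing which two indices to "sacrifice" in the Nochka weight / Second Main Theorem estimate and tracking how the truncation at level $H_V(d)-1$ interacts with the $\min\{\cdot,1\}$ terms, rather than any single deep new idea. The Second Main Theorem input and the "$H_V(d)$ agreements imply $f=g$" step are both available from the cited work, so the novelty is purely in the multiplicity-truncated divisor comparison and the resulting arithmetic of coefficients.
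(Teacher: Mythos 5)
Your overall architecture (a truncated Second Main Theorem applied to both maps, plus an auxiliary function whose zero divisor dominates the shared preimages) is the right one, and your treatment of the truncation on the SMT side is essentially what the paper does in its Lemma 2.1. But the choice of auxiliary function is where the argument breaks, in two ways. First, the pointwise inequality you propose is false: at a point where $\nu_{Q_j(f)}=\nu_{Q_j(g)}=1$ (a common simple zero, at which hypothesis (b) does force $Q_j(f)-Q_j(g)$ to vanish), the left side is $2$ while the right side is $\frac{2}{m_j+1}+1+\frac{1}{m_j+1}=1+\frac{3}{m_j+1}<2$ as soon as $m_j\geq 3$; moreover the parenthetical claim that $f=g$ on a divisor forces $\nu_{Q_j(f)}=\nu_{Q_j(g)}$ along it is unjustified, since equality of the maps on a codimension-one set says nothing about the transversal vanishing orders. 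Second, and more fundamentally, using the $q$ separate functions $Q_j(f)-Q_j(g)$ is too expensive. Each such function only detects the $j$-th preimage set: at a point of $f^{-1}(Q_i)$ with $i\neq j$ one has $g=\lambda f$ coordinatewise for some nonzero scalar $\lambda(z)$, so $Q_j(g)=\lambda^{d_j}Q_j(f)\neq 0$ there, and no recombination is possible. And each one costs a full $N_{Q_j(f)-Q_j(g)}(r)\leq d_j\bigl(T_f(r)+T_g(r)\bigr)+O(1)$. Summing over $j\notin J_1$ therefore puts a term of order $q\bigl(T_f(r)+T_g(r)\bigr)$ on the right of the assembled inequality, which destroys the arithmetic: the coefficient comparison would demand a threshold on $q$ roughly twice the stated one, so the claimed hypothesis cannot be recovered by this route.

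The paper avoids this with a single auxiliary function: assuming $f\neq g$, pick coordinates $s\neq t$ with $H:=f_sg_t-f_tg_s\not\equiv 0$. Hypothesis (b) says $f=g$ as points of $\mathbb{P}^n(\mathbb{C})$ on every set $\{0<\nu_{Q_j(f)}\leq m_j\}$, so every such point is a zero of $H$; hypothesis (a) makes these $q$ sets pairwise disjoint outside a codimension-two set, so their contributions to $\nu_H$ add, giving $\sum_{j=1}^{q}N^{[1]}_{Q_j(f),\leq m_j}(r)\leq N_H(r)\leq T_f(r)+T_g(r)$ by Jensen's formula --- one copy of $T_f+T_g$ for all $q$ hypersurfaces at once. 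This is precisely the step your scheme cannot reproduce, and it is also where hypotheses (a) and (b) are actually consumed. Two smaller points: the correction term $-\frac{4-2H_V(d)}{m_2+1}$ does not come from ``sacrificing two indices'' in the SMT weights; it falls out mechanically when one rewrites $\sum_{j=1}^{q}\frac{1}{m_j+1}$ via $\sum_{j=3}^{q}\frac{m_j}{m_j+1}=q-2-\sum_{j=3}^{q}\frac{1}{m_j+1}$ and combines the $j=1,2$ terms with the factor $1-\frac{H_V(d)-1}{m_2+1}$ multiplying the truncated counting functions. And the ``$H_V(d)$ identical agreements imply $f=g$'' device you invoke is not needed once $H$ is used, which is fortunate, since the $Q_j$ are only assumed to be in $N$-subgeneral position and need not span $I_d(V)$.
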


\begin{remark} (a). Obviously, Theorem \ref{TD} is just the special case when $m_{1}=m_{2}=\cdots=m_{q}=\infty$ in Theorem \ref{T1}.\\
(b). In the case of mapping into $\mathbb{P}^{n}(\mathbb{C})$ sharing hyperplanes in general position, that means $V=\mathbb{P}^{n}(\mathbb{C}),$ $d_{i}=1$$(1\leq i\leq q),$  $H_{V}(d)=n+1,$ $N=n=k,$ then the condition $$\sum_{j=3}^{q}\frac{m_{j}}{m_{j}+1}>q+\frac{d(2N-k+1)H_{V}(d)}{(k+1)(H_{V}(d)-1)}-\frac{qd}{H_{V}(d)-1}-\frac{4-2H_{V}(d)}{m_{2}+1}$$ becomes $$\sum_{j=3}^{q}\frac{m_{j}}{m_{j}+1}>\frac{(n-1)q}{n}+\frac{n+1}{n}+\frac{2(n-1)}{m_{2}+1}$$ which implies that this is a slight improvement of \cite[Theorem 1.4]{cao-yi}. Furthermore, for the special case when $n=1,$ the condition reduces to $\sum_{j=3}^{q}\frac{m_{j}}{m_{j}+1}>2$ which is just the condition of Theorem \ref{T-0}. Hence, the above theorem generalizes Theorem\ref{T-0} and  \cite[Theorem 3.9]{hu-li-yang}.\\
(c). Both Theorem \ref{TD} and Theorem \ref{T1} imply that $q\geq 3n+2$ in the case of mapping into $\mathbb{P}^{n}(\mathbb{C})$ sharing hyperplanes in general position. However,  Chen and Yan \cite{chen-yan} obtained that $q\geq 2n+3$ in this case. Thus, there maybe exist a better condition than the conditions on the number $q$ in Theorem \ref{TD} and Theorem \ref{T1}. \end{remark}\par

\begin{theorem}\label{T2}Let $f$ and $g$ be nonconstant meromorphic mappings of $\mathbb{C}^{m}$ into $\mathbb{P}^{n}(\mathbb{C}).$ Let
${\{Q_{j}}\}_{j=1}^{q}$ be set of slowly (with respect to $f$ and $g$) moving hypersurfaces in $\mathbb{P}^{n}(\mathbb{C})$ in weakly general position with degree $\deg Q_{j}=d_{j}.$ Put $d$ be the least common multiple of $d_{1}, \ldots, d_{n+2}$, i. e., $d=lcm(d_{1}, \ldots, d_{n+2}),$ and $N=\left(   \begin{array}{c}   n+d \\    n \\  \end{array}   \right)-1.$ Let $k$ $(1\leq k\leq n)$ be an integer. Let $m_{j}(j=1,\cdots,q)$ be positive integers or $\infty$ with $m_{1}\geq\cdots\geq m_{q}\geq n.$ Assume that \\
$(a) \dim (\cap_{j=0}^{k}f^{-1}(Q_{i_{j}}))\leq m-2$ for every $1\leq i_{0}<\cdots <i_k\leq q,$ \\
$(b) f(z)=g(z)$ on $$\cup_{j=1}^{q}(\{z\in\mathbb{C}^{m}:0<\nu_{Q_{j}(f)}(z)\leq m_{j}\}\cup\{z\in\mathbb{C}^{m}:0<\nu_{Q_{j}(g)}(z)\leq m_{j}\}).$$\\ Then the following assertions hold:\\
(i) If $\sum_{j=3}^{q}\frac{m_{j}}{m_{j}+1}>q+2k-2-\frac{qd}{(nN+n+1)N}-\frac{2kN-2}{m_{2}+1},$ then $f=g.$\\
(ii)In addition to the assumptions (a) and (b), we assume further that both $f$ and $g$ are algebraically nondegenerate over $\tilde{\mathcal{K}}_{\{Q_{j}\}_{j=1}^{q}}.$ If
$\sum_{j=3}^{q}\frac{m_{j}}{m_{j}+1}>q+2k-2-\frac{qd}{(N+2)N}-\frac{2kN-2}{m_{2}+1}),$ then $f=g.$
\end{theorem}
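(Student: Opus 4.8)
\noindent\emph{Plan of proof.} I would argue by contradiction: suppose $f\ne g$ and derive that the stated inequality on $\sum_{j=3}^{q}\frac{m_{j}}{m_{j}+1}$ cannot hold. The architecture is exactly that of the proof of Theorem \ref{TE} in \cite{quang-3} (of which Theorem \ref{T1} above is the counterpart for fixed targets): one plays a \emph{second main theorem} for slowly moving hypersurfaces against a \emph{sharing estimate} extracted from hypothesis (b). The single new ingredient is the multiple-values device of L. Yang and H. X. Yi, namely the splitting
$$N^{[1]}_{Q_{j}(f)}(r)=N^{[1]}_{Q_{j}(f),\le m_{j}}(r)+N^{[1]}_{Q_{j}(f),>m_{j}}(r),$$
together with the observation that, since every zero counted in the second term has multiplicity at least $m_{j}+1$, one has $N^{[1]}_{Q_{j}(f),>m_{j}}(r)\le\frac{1}{m_{j}+1}N_{Q_{j}(f)}(r)\le\frac{d_{j}}{m_{j}+1}T_{f}(r)+o(T_{f}(r))$ by the first main theorem, and likewise for $g$.

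After fixing reduced representations $\tilde f$, $\tilde g$ of $f,g$ and representations of the slowly moving $Q_{j}$, and performing the Veronese-type reduction of \cite{quang-3} (so that the available second main theorem applies and $T$-functions transform by the factor $d$), I would establish the sharing estimate. If $f\ne g$, the meromorphic functions $\dfrac{Q_{j}(\tilde f)}{Q_{j}(\tilde g)}$ cannot all coincide --- otherwise the family $\{Q_{j}\}_{j=1}^{q}$, which under the hypotheses is large enough to separate the points of $\mathbb P^{n}(\mathbb C)$, would force $f=g$. Choosing, as in \cite{quang-3} and \cite{cao-yi}, two of the exceptional indices and relabelling them as $1$ and $2$ (the freedom in this choice, together with $m_{1}\ge\cdots\ge m_{q}$, is what lets the final bound be stated with $\sum_{j=3}^{q}$ and with the correction depending on $m_{2}$), the holomorphic function
$$P:=Q_{1}(\tilde f)\,Q_{2}(\tilde g)-Q_{2}(\tilde f)\,Q_{1}(\tilde g)$$
is not identically $0$, with characteristic bounded by a multiple of $T_{f}(r)+T_{g}(r)$ up to the slowly-moving error. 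By hypothesis (b), at every $z$ with $0<\nu_{Q_{l}(f)}(z)\le m_{l}$ --- or the analogous condition for $g$ --- for some $l$, one has $f(z)=g(z)$, hence $P(z)=0$. Using hypothesis (a) and weak general position to bound by $k$ the number of the $Q_{l}$ through a generic point, so that $\sum_{l}N^{[1]}_{Q_{l}(f),\le m_{l}}(r)$ over-counts each zero of $P$ by at most a factor $k$, I would conclude
$$\big\|\qquad \sum_{l=1}^{q}N^{[1]}_{Q_{l}(f),\le m_{l}}(r)\ \le\ k\,N^{0}_{P}(r)+o\bigl(T_{f}(r)+T_{g}(r)\bigr),$$
and the same with $f$ replaced by $g$.

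Combining the sharing estimate with the split of the first paragraph bounds $\sum_{l}N^{[1]}_{Q_{l}(f)}(r)$ from above by a multiple of $T_{f}(r)+T_{g}(r)$ plus $\sum_{l}\frac{d_{l}}{m_{l}+1}T_{f}(r)$, and similarly for $g$. On the other side, the second main theorem of \cite{quang-3} --- in its general version for assertion (i) and in the sharper version valid for mappings algebraically nondegenerate over $\tilde{\mathcal K}_{\{Q_{j}\}_{j=1}^{q}}$ for assertion (ii) --- bounds $\sum_{l=1}^{q}N^{[1]}_{Q_{l}(f)}(r)$ from below by $\frac{qd}{(nN+n+1)N}T_{f}(r)$ (resp. $\frac{qd}{(N+2)N}T_{f}(r)$) up to the error term, and likewise for $g$. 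Adding the resulting inequalities for $f$ and $g$, cancelling the common factor $T_{f}(r)+T_{g}(r)$, discarding the contributions of the two indices $l=1,2$ that built $P$ (their weights $\frac{m_{l}}{m_{l}+1}<1$ lost outright, while $m_{2}$ reappears in the error correction precisely because $Q_{2}$ enters $P$), and collecting the surviving terms $\frac{1}{m_{l}+1}$ for $l\ge3$, one arrives --- with the same bookkeeping as in \cite{quang-3} --- at exactly the negation of the inequality in assertion (i) (resp. (ii)). This contradiction forces $f=g$.

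The step I expect to be the genuine obstacle is the sharing estimate: bounding $N^{0}_{P}(r)$ from below by $\sum_{l}N^{[1]}_{Q_{l}(f),\le m_{l}}(r)$ correctly when the divisors $\nu_{Q_{l}(f)}$, $\nu_{Q_{l}(g)}$ for distinct $l$ may share codimension-one components --- hypothesis (a) controls only $(k+1)$-fold intersections, which is exactly what produces the factor $k$, and hence the terms $2k$ and $kN$ in the final estimate. Two subsidiary difficulties are the degree bookkeeping when the $d_{j}$ are unequal and only $d=\mathrm{lcm}(d_{1},\dots,d_{n+2})$ is at one's disposal, and making sure that every $o(\cdot)$ error term --- in particular those coming from the slowly moving coefficients and from the Veronese reduction --- stays negligible against $T_{f}(r)+T_{g}(r)$. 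Everything beyond the sharing estimate is routine and parallel to \cite{quang-3}; assertion (ii) differs from assertion (i) only in which second main theorem is invoked.
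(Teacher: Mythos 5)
Your overall architecture (argue by contradiction, play a second main theorem for slowly moving hypersurfaces against a sharing estimate carrying a factor $k$ from hypothesis (a), and insert the Yang--Yi truncation device) is the same as the paper's. The place where the proposal genuinely breaks is the auxiliary function. The paper takes $H:=f_{s}g_{t}-f_{t}g_{s}$ built from the \emph{coordinate functions} of reduced representations: if $f\neq g$ some such minor is $\not\equiv 0$ automatically, it vanishes wherever $f(z)=g(z)$, and Jensen's formula gives $N_{H}(r)\le T_{f}(r)+T_{g}(r)$ with constant exactly $1$. You instead take $P:=Q_{1}(\tilde f)\,Q_{2}(\tilde g)-Q_{2}(\tilde f)\,Q_{1}(\tilde g)$, and this creates two concrete gaps. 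First, $P\not\equiv 0$ does not follow from $f\neq g$: you must exclude that all ratios $Q_{j}(\tilde f)/Q_{j}(\tilde g)$ coincide, and your one-line justification (``the family is large enough to separate points'') is not available here --- in assertion (i) neither $f$ nor $g$ is assumed algebraically nondegenerate over $\tilde{\mathcal K}_{\{Q_{j}\}_{j=1}^{q}}$, and weak general position of moving hypersurfaces does not by itself deliver that separation. Second, even after the $Q_{j}^{d/d_{j}}$ normalization (which you do need: with unequal degrees $P$ does not vanish at points where $\tilde f$ and $\tilde g$ are merely proportional), $P$ is homogeneous of degree $d$ in each of $\tilde f,\tilde g$, so Jensen only yields $N_{P}(r)\le d\,(T_{f}(r)+T_{g}(r))+o(T_{f}(r)+T_{g}(r))$. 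That extra factor $d$ propagates through the sharing estimate and turns the decisive right-hand side $2kN\bigl(1-\frac{N}{m_{2}+1}\bigr)(T_{f}(r)+T_{g}(r))$ into $2kNd(\cdots)$, so you would end up proving a strictly weaker inequality than the one asserted.

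A smaller but real misconception: the special role of the indices $1,2$ (the sum starting at $j=3$, the correction term in $m_{2}$) has nothing to do with which hypersurfaces build the auxiliary function. It comes purely from the ordering $m_{1}\ge m_{2}\ge\cdots\ge m_{q}$ in the algebraic massaging of the second-main-theorem inequality (Lemmas \ref{L1} and \ref{L2}, where the $j=1$ term is peeled off using $\frac{1}{m_{2}+1}-\frac{1}{m_{1}+1}\ge 0$ and the remaining coefficients are bounded by the $j=2$ coefficient). You cannot ``relabel two exceptional indices as $1$ and $2$'' without destroying that monotonicity. Replacing your $P$ by the coordinate minor $H$ removes all of these obstacles at once, and the rest of your outline then goes through essentially as in the paper.
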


It is easy to see that Theorem \ref{TE} is just the special case when $m_{1}=m_{2}=\cdots=m_{q}=\infty$ in Theorem \ref{T2}.

\section{Preliminaries}
\subsection{}
Set $\|z\|=(|z_{1}|^{2}+\cdots+|z_{m}|^{2})$ for $z=(z_{1},\cdots,z_{m}),$  for $r>0,$ define
\begin{equation*}
B_{m}(r):={\{z\in\mathbb{C}^{m}:\|z\|<r}\},\quad
S_{m}(r):={\{z\in\mathbb{C}^{m}:\|z\|=r}\}. \end{equation*}
Let $d=\partial+\overline{\partial}, \quad d^{c}=(4\pi\sqrt{-1})^{-1}(\partial+\overline{\partial}).$ Write
\begin{equation*}
\sigma_{m}(z):=(dd^{c}\|z\|^{2})^{m-1},\quad
 \eta_{m}(z):=d^{c}\log\|z\|^{2}\wedge(dd^{c}\|z\|^{2})^{m-1}\end{equation*}
  for $z\in\mathbb{C}^{m}\setminus{\{0}\}.$

\subsection{}
Let $\varphi(\not\equiv 0)$ be an entire function on $\mathbb{C}^{m}.$ For $a\in\mathbb{C}^{m},$ we write $\varphi(z)=\sum_{i=0}^{\infty}P_{i}(z-a),$ where the term $P_{i}(z)$ is a homogeneous polynomial of degree $i.$ We denote the zero-multiplicity of $\varphi$ at $a$ by $\nu_{\varphi}(a)=\min{\{i:P_{i}\neq 0}\}.$ Set $|\nu_{\varphi}|:=\overline{{\{z\in\mathbb{C}^{m}:\nu_{\varphi}(z)\neq 0}\}},$ which is a purely $(m-1)$-dimensional analytic subset or empty set.\par
Let $h$ be a nonzero meromorphic function on $\mathbb{C}^{m}.$ For each $a\in\mathbb{C}^{m},$ we choose nonzero holomorphic functions $h_{0},h_{1}$ on
a neighborhood $U$ of $a$ such that $h=h_{0}:h_{1}$ on $U$ and $\dim(h_{0}^{-1}(0)\cap h_{1}^{-1}(0))\leq m-2,$ we define $\nu_{h}:=\nu_{h_{0}},\nu_{h}^{\infty}:=\nu_{h_{1}},$
which are independent of the choice of $h_{0},h_{1}.$

\subsection{}
Let $f:\mathbb{C}^{m}\rightarrow\mathbb{P}^{n}(\mathbb{C})$ be a nonconstant meromorphic mapping, $(\omega_{0}:\cdots:\omega_{n})$ be an arbitrarily fixed homogeneous coordinates on $\mathbb{P}^{n}(\mathbb{C}).$ We choose holomorphic functions $f_{0},\cdots,f_{n}$ on $\mathbb{C}^{m}$ such that
$$I_{f}:={\{z\in\mathbb{C}^{m}:f_{0}(z)=\cdots=f_{n}(z)=0}\}$$ is of dimension  $\leq m-2,$  and $f=(f_{0}:\cdots:f_{n})$
is called a reduced representation of $f.$ Set $\|f\|=(\sum_{j=0}^{n}|f_{j}|^{2})^{\frac{1}{2}}.$ The characteristic function of  $f$ is defined by
\begin{equation*}
T_{f}(r)=\int_{S_{m}(r)}\log\|f\|\eta_{m}-
 \int_{S_{m}(1)}\log\|f\|\eta_{m}\quad(r>1).\end{equation*}
Note that $T_{f}(r)$ is independent of the choice of the reduced representation of $f.$\par

\subsection{}
For a divisor $\nu$ on $\mathbb{C}^{m}$ and let $k,M$ be positive integers or $\infty,$ we define the following counting functions of $\nu$ by:
\begin{equation*}
\nu^{[M]}(z)=\min\{\nu(z), M\},\quad \nu_{\leq k}^{[M]}(z)=
\left\{\begin{array}{ll}
 0, & \hbox{if}\quad \nu(z)>k; \\
 \nu^{[M]}(z), & \hbox{if}\quad \nu(z)\leq k,
\end{array}
\right.
\end{equation*}
\begin{eqnarray*} n(t)=\left\{
                                    \begin{array}{ll}
                                      \int_{|\nu|\cap B(t)}\nu(z)\sigma_{m}, & \hbox{if $m\geq 2;$} \\
                                      \sum_{|z|\leq t}\nu(z), & \hbox{if $m=1.$}
                                    \end{array}
                                  \right.
\end{eqnarray*} Similarly, we define $n^{[M]}(t),$ $n_{> k}^{[M]}(t)$ and $n_{\leq k}^{[M]}(t).$\par

Define
\begin{equation*}
N(r, \nu)=\int_{1}^{r}\frac{n(t)}{t^{2m-1}}dt\quad
(1<r<\infty).\end{equation*} Similarly, we define $N(r, \nu^{[M]}),$ $N(r,
\nu^{[M]}_{> k})$ and $N(r, \nu^{[M]}_{\leq k})$ and denote them by
$N^{[M]}(r, \nu),$ $N^{[M]}_{> k}(r, \nu)$ and $N^{[M]}_{\leq k}(r,
\nu)$ respectively.\par

For a meromorphic function $f$ on $\mathbb{C}^{m},$ we denote
by $$N_{f}(r)=N(r, \nu_{f}),\quad
N^{[M]}_{f}(r)=N^{[M]}(r, \nu_{f}),$$
$$ N^{[M]}_{f, \leq
k}(r)=N^{[M]}_{\leq k}(r, \nu_{f}),\quad N^{[M]}_{f, >
k}(r)=N^{[M]}_{> k}(r, \nu_{f}).$$ In addition, if $M=\infty,$ we will omit
the superscript $M$ for brevity. On the other hand we have the following Jensen's formula:
$$N_{f}(r)-N_{\frac{1}{f}}(r)=\int_{S_{m}(r)}\log|f|\eta_{m}-\int_{S_{m}(1)}\log|f|\eta_{m}.$$
\par

\subsection{}
Let $Q$ be a hypersurface in $\mathbb{P}^{n}(\mathbb{C})$ of the degree $d$ defined by

$$\sum_{I\in \mathcal{I}_{d}}a_{I}x^{I}=0,$$
where $\mathcal{I}_{d}=\{(i_{0}, \ldots, i_{n})\in\mathbb{N}_{0}^{n+1}; i_{0}+\cdots+i_{n}=d\},$ $I=(i_{0}, \ldots, i_{n})\in \mathcal{I}_{d},$ $x^{I}=x_{0}^{i_{0}}\cdots x_{n}^{i_{n}}$ and $(c_{0}: \cdots: c_{n})$ is homogeneous coordinates of $\mathbb{P}^{n}(\mathbb{C}).$\par

Let $f: \mathbb{C}^{m}\rightarrow V\subset \mathbb{P}^{n}(\mathbb{C})$ be an algebraically nondegenerate meromorphic mapping into $V$ with a reduced representation $f=(f_{0}: \cdots : f_{n}).$ We define $$Q(f)=\sum_{I\in\mathcal{I}_{d}}a_{I}f^{I},$$
where $f^{I}=f_{0}^{i_{0}}\cdots f_{n}^{i_{n}}$ for $I=(i_{0}, \ldots, i_{n}).$ We denote by $f^{*}Q=\nu_{Q(f)}$ as divisor.

We define Nevanlinna's deficiency $\delta_{f}(Q)$ by
$$\delta_{f}(Q)=1-\limsup_{r\rightarrow\infty}\frac{N_{Q(f)}(r)}{T_{f}(r)}.$$
If $\delta_{f}(Q)>0,$ then $Q$ is called a deficient hypersurface in the sense of Nevanlinna.\par

As usual, $"\|P"$ means the assertion $P$ holds for all $r\in[0,\infty)$ excluding a Borel subset $E$ of the interval $[0,\infty)$ with $\int_{E}dr<\infty.$\par

\begin{theorem} \cite{quang-an-2014}\label{T21}(The Second Main Theorem for fixed hypersurfaces in subgeneral position) Let $V$ be a complex projective subvariety of $\mathbb{P}^{n}(\mathbb{C})$ of dimension $k(k\leq n).$ Let $\{Q_{i}\}_{i=1}^{q}$ be hypersurfaces of $\mathbb{P}^{n}(\mathbb{C})$ in $N-$subgeneral position with respect to $V,$ with $\deg Q_{i}=d_{i}$ $(1\leq i\leq q).$ Let $d$ be the least common multiple of $d_{i}$'s, i.e., $d=lcm(d_{1}, \ldots, d_{q}).$ Let $f$ be a meromorphic mapping of $\mathbb{C}^{m}$ into $V$ which is nondegenerate over $I_{d}(V).$ If $q>\frac{(2N-k+1)H_{V}(d)}{k+1},$ then we have

$$\|\left(q-\frac{(2N-k+1)H_{V}(d)}{k+1}\right)T_{f}(r)\leq \sum_{i=1}^{q}\frac{1}{d_{i}}N^{[H_{V}(d)-1]}_{Q_{i}(f)}(r)+o(T_{f}(r)).$$
\end{theorem}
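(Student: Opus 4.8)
The plan is to prove this truncated Second Main Theorem by \emph{linearizing} the degree-$d$ hypersurfaces into hyperplanes through the map attached to $I_d(V)$, and then running a Nochka-weight argument calibrated to the dimension $k$ of $V$ rather than to the ambient dimension $H_V(d)-1$. The formula is exactly a hypersurface-and-subvariety version of Nochka's classical defect relation: when $V=\mathbb{P}^n(\mathbb{C})$, $d=1$, $H_V(d)=n+1$, $k=n$, the factor $\frac{(2N-k+1)H_V(d)}{k+1}$ collapses to $2N-n+1$ and the truncation $H_V(d)-1$ to $n$, so the whole statement should follow the Nochka blueprint once the correct linearization is set up.

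First I would reduce to a common degree. Since $d=\mathrm{lcm}(d_1,\dots,d_q)$, replacing each $Q_i$ by $\widetilde Q_i:=Q_i^{d/d_i}$ gives hypersurfaces of the same degree $d$ with unchanged zero sets, hence the same $N$-subgeneral position with respect to $V$; moreover $\nu_{\widetilde Q_i(f)}=\tfrac{d}{d_i}\nu_{Q_i(f)}$ yields the pointwise bound $\min\{\nu_{\widetilde Q_i(f)},H_V(d)-1\}\le \tfrac{d}{d_i}\min\{\nu_{Q_i(f)},H_V(d)-1\}$, so that $N^{[H_V(d)-1]}_{\widetilde Q_i(f)}(r)\le \tfrac{d}{d_i}N^{[H_V(d)-1]}_{Q_i(f)}(r)$. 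Next, fixing a basis $[v_0],\dots,[v_M]$ of $I_d(V)$ with $M=H_V(d)-1$, I would set $F:=(v_0(f):\cdots:v_M(f))$. Because $f$ is nondegenerate over $I_d(V)$, the map $F:\mathbb{C}^m\to\mathbb{P}^M(\mathbb{C})$ is linearly nondegenerate; writing $\widetilde Q_i\equiv\sum_j a_{ij}v_j \pmod{I(V)\cap H_d}$ and $\widehat Q_i:=\sum_j a_{ij}x_j$, the fact that $f$ maps into $V$ gives $\widehat Q_i(F)=\widetilde Q_i(f)$ exactly, and comparing norms gives $T_F(r)=d\,T_f(r)+O(1)$.

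The problem now reads as a Second Main Theorem for the linearly nondegenerate $F$ against the hyperplanes $\{\widehat Q_i\}$, which are in $N$-subgeneral position with respect to the $k$-dimensional image variety $\widetilde V:=\overline{F(V)}\subset\mathbb{P}^M(\mathbb{C})$. Here is the crux, and the source of the denominator $k+1$ in place of the ambient $M+1=H_V(d)$: one must construct \emph{Nochka weights relative to $\widetilde V$}. I would establish the existence of rationals $\omega_1,\dots,\omega_q\in(0,1]$ and a Nochka constant $\theta$ governed by the dimension $k$ — so that $\theta\ge\frac{k+1}{2N-k+1}$ and $\sum_i\omega_i$ is controlled by $q,N,k$ — together with the Nochka domination inequality for every subfamily that fails general position on $\widetilde V$. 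This is the combinatorial heart: the classical weight construction normally reads off the ambient projective dimension, whereas here subgeneral position is measured only against the low-dimensional $\widetilde V$, so the relevant rank is $k$.

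With these weights in hand I would invoke the Second Main Theorem for a linearly nondegenerate map into $\mathbb{P}^M(\mathbb{C})$ intersecting hyperplanes in general position with respect to $\widetilde V$, whose Wronskian / logarithmic-derivative estimate yields truncation exactly at level $M=H_V(d)-1$ (the length of the filtration determined by $F$) and the general-position factor $q-H_V(d)$. Weighting the proximity sum by the $\omega_i$ and substituting the Nochka weight identities converts this into the subgeneral factor $q-\frac{(2N-k+1)H_V(d)}{k+1}$, producing $\|\,\big(q-\frac{(2N-k+1)H_V(d)}{k+1}\big)T_F(r)\le \sum_i N^{[H_V(d)-1]}_{\widehat Q_i(F)}(r)+o(T_F(r))$. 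Finally, dividing by $d$, using $T_F=dT_f+O(1)$ and the truncated bound $N^{[H_V(d)-1]}_{\widehat Q_i(F)}\le \tfrac{d}{d_i}N^{[H_V(d)-1]}_{Q_i(f)}$ from the first step, gives the asserted inequality. I expect the main obstacle to be precisely the construction of the previous paragraph: producing Nochka weights whose bounds are pinned simultaneously to $k$ and to $H_V(d)$ and feeding them correctly into the truncated Wronskian estimate, since it is the interplay between the subgeneral-position data $(N,k)$ and the filtration length $H_V(d)$ that makes the constant $\frac{(2N-k+1)H_V(d)}{k+1}$ sharp.
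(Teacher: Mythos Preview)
The paper does not actually prove this statement: Theorem~\ref{T21} is quoted verbatim from Quang~\cite{quang-2} and is used as a black box in the proof of Lemma~\ref{L1}. There is therefore no proof in the paper to compare your proposal against.

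That said, your outline is essentially the strategy carried out in \cite{quang-2}: reduce to common degree $d$, linearize via a basis of $I_d(V)$ to obtain a linearly nondegenerate map $F$ into $\mathbb{P}^{H_V(d)-1}(\mathbb{C})$, construct Nochka weights calibrated to the pair $(N,k)$ rather than to the ambient dimension, and feed these into the Cartan--Ahlfors Wronskian estimate. You have correctly identified the key technical point, namely that the Nochka weight constant $\theta$ must satisfy $\theta\ge\frac{k+1}{2N-k+1}$ with $k=\dim V$ rather than the ambient $H_V(d)-1$; this is precisely what produces the denominator $k+1$ in the main term. One small caution: in your sketch you write that the intermediate inequality should read $\big(q-\frac{(2N-k+1)H_V(d)}{k+1}\big)T_F(r)\le\sum_i N^{[H_V(d)-1]}_{\widehat Q_i(F)}(r)+o(T_F(r))$, but after the Nochka weighting the counting terms on the right still carry the weights $\omega_i\le 1$, and one needs the separate Nochka product inequality to remove them and restore the unweighted truncated counting functions; make sure this step is made explicit rather than absorbed into ``substituting the Nochka weight identities.''
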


\subsection{}
We denote by $\mathcal{M}$ (resp. $\mathcal{K}_{f}$) the field of all meromorphic functions (resp. small meromorphic functions) on $\mathbb{C}^{m}.$ Denote by $\mathcal{H}_{\mathbb{C}^{m}}$ the ring of all holomorphic functions on $\mathbb{C}^{m}.$ Let $Q$ be a homogeneous polynomial in $\mathcal{H}_{\mathbb{C}^{m}}[x_{0},\ldots, x_{n}]$ of degree $d\geq 1.$ Denote by $Q(z)$ be the homogeneous polynomial over $\mathbb{C}$ obtained by substituting a specific point $z\in\mathbb{C}^{m}$ into the coefficients of $Q.$ We also call a moving hyper surface in $\mathbb{P}^{n}(\mathbb{C})$ each homogeneous polynomial $Q\in\mathcal{H}_{\mathbb{C}^{m}}[x_{0},\ldots, x_{n}]$ such that the common zero set of all coefficients of $Q$ has codimension at least two.\par

Let $Q$ be a moving hypersurface in $\mathbb{P}^{n}(\mathbb{C})$ of degree $d$ given by
$$Q(z)=\sum_{I\in \mathcal{I}_{d}}a_{I}w^{I},$$
where $\mathcal{I}_{d}=\{(i_{0}, \ldots, i_{n})\in\mathbb{N}_{0}^{n+1}; i_{0}+\cdots+i_{n}=d\},$ $a_{I}\in \mathcal{H}_{\mathbb{C}^{m}}$ and $w^{I}=w^{i_{0}}\cdots w^{i_{n}}.$ We consider the meromorphic mapping $Q^{'}: \mathbb{C}^{m}\rightarrow\mathbb{P}^{N}(\mathbb{C}),$ where $N=\left(  \begin{array}{c}   n+d \\    n \\    \end{array}   \right)-1,$ given by
$$Q^{'}(z)=(a_{I_{0}}(z): \cdots: a_{I_{N}}(z))\quad \quad (\mathcal{I}_{d}=\{I_{0, \ldots, I_{N}}\}).$$ The moving hypersurface $Q$ is said to be "slowly" (with respect to $f$) if $\| T_{Q^{'}}(r)=o(T_{f}(r)).$ This is equivalent to $\| T_{\frac{a_{I_{i}}}{a_{I_{j}}}}(r)=o(T_{f}(r))$ for every $a_{I_{j}}\not\equiv 0.$\par

Let $\{Q_{i}\}_{i=1}^{q}$ be a family of moving hypersurfaces in $\mathbb{P}^{n}(\mathbb{C}),$ $\deg Q_{i}=d_{i}.$ Assume that
$$Q_{i}=\sum_{I\in \mathcal{I}_{d_{i}}}a_{iI}w^{I}.$$
We denote by $\tilde{\mathcal{K}}_{\{Q_{j}\}_{j=1}^{q}}$ the smallest subfield of $\mathcal{M}$ which contains $\mathbb{C}$ and all $\frac{a_{iI}}{a_{iJ}}$ with $a_{iJ}\not\equiv 0.$ We say that $\{Q_{i}\}_{i=1}^{q}$ are in weakly general position if there exits $z\in\mathbb{C}^{m}$ such that all $a_{iI}$ $(1\leq i\leq q, I\in\mathcal{I})$ are holomorphic at $z$ and for any $1\leq i_{0}<\cdots<i_{n}\leq q$ the system of equations
$$\left\{
 \begin{array}{c}
                                                                                                                    Q_{i_{j}}(z)(w_{0}, \ldots, w_{n})=0 \\
                                                                                                                    1\leq j\leq n \\
                                                                                                                  \end{array}\right.$$
has only the trivial solution $w=(0,\ldots, 0)$ in $\mathbb{C}^{n+1}.$\par

\begin{theorem} \cite{quang-3} \label{T22} (The Second Main Theorem for moving hypersurfaces in weakly general position) Let $f$ be a meromorphic mapping of $\mathbb{C}^{m}$ into $\mathbb{P}^{n}(\mathbb{C}).$ Let $Q_{i}$$(i=1,\ldots, q)$ be slowly (with respect to $f$) moving hypersurfaces of  $\mathbb{P}^{n}(\mathbb{C})$ in weakly general position with $\deg Q_{i}=d_{i}.$ Put $d$ be the least common multiple of $d_{1}, \ldots, d_{n+2}$, i. e., $d=lcm(d_{1}, \ldots, d_{n+2}),$ and $N=\left(  \begin{array}{c}   n+d \\   n \\   \end{array}   \right)-1.$ \par
(i) If $Q_{i}(f)\not\equiv 0$ $(1\leq i\leq q)$ and $q\geq nN+n+1,$ then we have
$$\| \frac{q}{nN+n+1}T_{f}(r)\leq \sum_{i=1}^{q}\frac{1}{d_{i}}N^{[N]}_{Q_{i}(f)}(r)+o(T_{f}(r)).$$\par

(ii) If $f$ is algebraically nondegenerate over $\tilde{\mathcal{K}}_{\{Q_{j}\}_{j=1}^{q}}$ and $q\geq N+2,$ then we have
$$\| \frac{q}{N+2}T_{f}(r)\leq \sum_{i=1}^{q}\frac{1}{d_{i}}N^{[N]}_{Q_{i}(f)}(r)+o(T_{f}(r)).$$
\end{theorem}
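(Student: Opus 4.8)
The plan is to reduce the statement to a Second Main Theorem for slow moving \emph{hyperplanes} by linearization, and then to control the truncation level and the general-position data. Writing $\mathcal{I}_{d}=\{I_{0},\ldots,I_{N}\}$, I would first pass to the composition of $f$ with the degree-$d$ Veronese embedding, i.e. set $F=(f^{I_{0}}:\cdots:f^{I_{N}}):\mathbb{C}^{m}\rightarrow\mathbb{P}^{N}(\mathbb{C})$, so that for each moving hypersurface $Q_{i}=\sum_{I}a_{iI}w^{I}$ the function $Q_{i}(f)$ equals $\langle F,L_{i}\rangle$, where $L_{i}=(a_{iI_{0}}:\cdots:a_{iI_{N}})$ is a moving hyperplane in $\mathbb{P}^{N}(\mathbb{C})$. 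Since the degree-$d$ Veronese map multiplies the characteristic function by $d$, one has $\|\,T_{F}(r)=d\,T_{f}(r)+o(T_{f}(r))$, and the ``slow'' hypothesis becomes $\|\,T_{L_{i}}(r)=o(T_{F}(r))$; these two identities are exactly what will produce the factor $d$ and the normalizations $1/d_{i}$ in the final inequalities, the differing degrees being absorbed through $d=\mathrm{lcm}(d_{1},\ldots,d_{n+2})$ and the $1/d_{i}$ weighting of the counting functions.

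The second and central step is to transfer the weakly-general-position hypothesis on $\{Q_{i}\}$ into a genuine general-position statement for $\{L_{i}\}$ in $\mathbb{P}^{N}(\mathbb{C})$. The algebraic heart is a resultant/Nullstellensatz argument: the assumption that at some $z$ any $n+1$ of the forms $Q_{i_{0}}(z),\ldots,Q_{i_{n}}(z)$ have only the trivial common zero implies, by an effective membership estimate, that every monomial $w^{I}$ of degree $d$ lies in the ideal they generate, and hence that among any $nN+n+1$ of the moving hyperplanes $L_{i}$ there are $N+1$ that are in general position (linearly independent over the small-function field at a generic point). It is precisely this combinatorial bound $nN+n+1$ that will appear in part~(i), and I expect this transfer — together with the bookkeeping needed to hold the truncation at level $[N]$ — to be the main obstacle of the whole proof.

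With the linearized data in hand, I would invoke a Ru--Stoll type Second Main Theorem for slow moving hyperplanes. For part~(ii), algebraic nondegeneracy of $f$ over $\tilde{\mathcal{K}}_{\{Q_{j}\}_{j=1}^{q}}$ translates into linear nondegeneracy of $F$ over the field generated by the coefficient ratios of the $L_{i}$; applying the nondegenerate moving-hyperplane SMT and averaging over the targets yields $\|\,\tfrac{q}{N+2}T_{F}(r)\le\sum_{i}N^{[N]}_{Q_{i}(f)}(r)+o(T_{F}(r))$. For part~(i), where only $Q_{i}(f)\not\equiv0$ is assumed, $F$ may be degenerate, so I would instead run the nondegenerate SMT on the $(N+1)$-element general-position subfamilies supplied by step two, summing over all such subfamilies; the averaging over these subsets, whose count is governed by the bound $nN+n+1$, degrades the factor from $q/(N+2)$ to $q/(nN+n+1)$.

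Finally, the truncation at level $[N]$ is maintained throughout by the logarithmic-derivative lemma and the Wronskian estimate for $F$: because $F$ takes values in $\mathbb{P}^{N}(\mathbb{C})$, derivatives up to order $N$ suffice, so each zero of $\langle F,L_{i}\rangle$ contributes at most $N$ to the governing Wronskian counting function, which is the truncation recorded in the statement. Collecting the three contributions — the Veronese rescaling of step one, the general-position bound of step two, and the moving-hyperplane SMT of step three — and clearing the factor $d$ by means of $T_{F}(r)=d\,T_{f}(r)+o(T_{f}(r))$ and $\langle F,L_{i}\rangle=Q_{i}(f)$ gives the two inequalities in~(i) and~(ii) as stated.
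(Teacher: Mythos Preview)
The paper does not prove this statement at all: Theorem~\ref{T22} is quoted verbatim from \cite{quang-3} and used as a black box (it feeds into Lemma~\ref{L2} and the proof of Theorem~\ref{T2}). There is therefore no ``paper's own proof'' to compare your proposal against.

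That said, your outline is the standard route one finds in the source literature: lift each $Q_{i}$ to a common degree $d$ via $Q_{i}\mapsto Q_{i}^{d/d_{i}}$, linearize through the degree-$d$ Veronese so that $T_{F}(r)=d\,T_{f}(r)+O(1)$ and each $Q_{i}(f)$ becomes $\langle F,L_{i}\rangle$ for a slow moving hyperplane $L_{i}$ in $\mathbb{P}^{N}(\mathbb{C})$, and then invoke a moving-hyperplane Second Main Theorem with truncation level $N$ coming from the Wronskian in $\mathbb{P}^{N}$. The one place where your sketch is optimistic is the ``general-position transfer'' in part~(i): from the weakly-general-position hypothesis on the $Q_{i}$ you do \emph{not} directly get that any $nN+n+1$ of the $L_{i}$ contain $N+1$ linearly independent ones over the small-function field. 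What actually drives the constant $nN+n+1$ in \cite{quang-3} is a subgeneral-position argument: the image $F(\mathbb{C}^{m})$ lies in the Veronese variety, and weakly general position of $n+1$ of the $Q_{i}$ only guarantees that the corresponding $L_{i}$ are in $n$-subgeneral position with respect to that variety (equivalently, one can extract $n+1$ hyperplanes whose common zero set misses the Veronese image, not $N+1$ independent hyperplanes). The factor $nN+n+1$ then arises from a Nochka-weight or replacement argument for subgeneral position, not from the combinatorial extraction you describe. Your plan for part~(ii) is essentially correct as stated.
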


\subsection{Two lemmas}
\begin{lemma}\label{L1} Let $f:\mathbb{C}^{m}\rightarrow\mathbb{P}^{n}(\mathbb{C})$ be a meromorphic mapping which is nondegenerate over $I_{d}(V),$ and $\{Q_{j}\}_{j=1}^{q}$ be a family of hypersurfaces of $\mathbb{P}^{n}(\mathbb{C})$ in $N-$subgeneral position with respect to $V,$ with degree $\deg Q_{j}=d_{j}$ $(1\leq j\leq q).$ Let $d$ be the least common multiple of $d_{1}, \ldots, d_{q},$ that is $d=lcm(d_{1}, \ldots, d_{q}).$ Then
\begin{eqnarray*}\| &&\left[q-\frac{(2N-k+1)H_{V}(d)}{k+1}-\frac{H_{V}(d)-1}{d_{1}}\left(\frac{1}{m_{2}+1}-\frac{1}{m_{1}+1}\right)\left(1-\delta_{f}(Q_{1})\right)\right.\\
&&\left.-\sum_{j=1}^{q}\frac{1}{d_{j}}\frac{H_{V}(d)-1}{m_{j}+1}(1-\delta_{f}(Q_{j}))\right]T_{f}(r)
\\&\leq& \sum_{j=1}^{q}\frac{1}{d_{j}}\left(1-\frac{H_{V}(d)-1}{m_{2}+1}\right)N^{[H_{V}(d)-1]}_{Q_{j}(f), \leq m_{j}}(r)+o(T_{f}(r)),\end{eqnarray*}
where $m_{1}\geq m_{2}\geq \cdots \geq m_{q}$ are integers or $\infty.$
\end{lemma}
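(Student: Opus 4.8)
Throughout abbreviate $L:=H_V(d)-1$ (if $L=0$ all truncated counting functions vanish and there is nothing to prove, so assume $L\geq 1$). The plan is to start from the Second Main Theorem in subgeneral position and then run the elementary multiplicity-splitting device from the one-variable theory of multiple values. First I would apply Theorem \ref{T21} (legitimate, since $q>\tfrac{(2N-k+1)H_V(d)}{k+1}$ holds in every situation where this lemma is used), which gives
$$\|\left(q-\frac{(2N-k+1)H_V(d)}{k+1}\right)T_f(r)\leq \sum_{j=1}^{q}\frac1{d_j}N^{[L]}_{Q_j(f)}(r)+o(T_f(r)).$$
Then for each $j$ I would split $N^{[L]}_{Q_j(f)}(r)=N^{[L]}_{Q_j(f),\leq m_j}(r)+N^{[L]}_{Q_j(f),>m_j}(r)$ and, inside the first term, write $N^{[L]}_{Q_j(f),\leq m_j}=\left(1-\tfrac{L}{m_2+1}\right)N^{[L]}_{Q_j(f),\leq m_j}+\tfrac{L}{m_2+1}N^{[L]}_{Q_j(f),\leq m_j}$, so that the factor $1-\tfrac{L}{m_2+1}$ stays in front of the ``good'' counting function and the remainder $E_j(r):=\tfrac{L}{m_2+1}N^{[L]}_{Q_j(f),\leq m_j}(r)+N^{[L]}_{Q_j(f),>m_j}(r)$ is what has to be absorbed into a multiple of $T_f(r)$.

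To bound $E_j$ I would use the pointwise facts $\nu^{[L]}_{Q_j(f),\leq m_j}\leq\nu_{Q_j(f),\leq m_j}$ and, on $\{\nu_{Q_j(f)}>m_j\}$, $\min\{\nu_{Q_j(f)},L\}\leq L\leq\tfrac{L}{m_j+1}\nu_{Q_j(f)}$ (there $\nu_{Q_j(f)}\geq m_j+1$ when $m_j<\infty$; the case $m_j=\infty$ is trivial), which integrate to $N^{[L]}_{Q_j(f),\leq m_j}(r)\leq N_{Q_j(f),\leq m_j}(r)$ and $N^{[L]}_{Q_j(f),>m_j}(r)\leq\tfrac{L}{m_j+1}N_{Q_j(f),>m_j}(r)$. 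Since $N_{Q_j(f),\leq m_j}(r)+N_{Q_j(f),>m_j}(r)=N_{Q_j(f)}(r)$, for $j\geq 2$ the ordering $m_j\leq m_2$ gives $\tfrac{L}{m_2+1}\leq\tfrac{L}{m_j+1}$ and hence $E_j(r)\leq\tfrac{L}{m_j+1}N_{Q_j(f)}(r)$, while for $j=1$ the reverse inequality $\tfrac{L}{m_1+1}\leq\tfrac{L}{m_2+1}$ gives $E_1(r)\leq\tfrac{L}{m_2+1}N_{Q_1(f)}(r)$. Combining this with $N_{Q_j(f)}(r)\leq(1-\delta_f(Q_j))T_f(r)+o(T_f(r))$ (immediate from the definition of $\delta_f(Q_j)$ and $T_f(r)\to\infty$), and using the identity $\tfrac{L}{d_1(m_2+1)}=\tfrac{L}{d_1}\bigl(\tfrac1{m_2+1}-\tfrac1{m_1+1}\bigr)+\tfrac{L}{d_1(m_1+1)}$ to return the $j=1$ contribution to its natural slot inside $\sum_{j=1}^{q}\tfrac{L}{d_j(m_j+1)}(1-\delta_f(Q_j))$, I would substitute $\sum_j\tfrac1{d_j}E_j(r)$ back into the Second Main Theorem inequality and move every multiple of $T_f(r)$ to the left. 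This produces exactly the stated estimate once $L$ is rewritten as $H_V(d)-1$.

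The argument is essentially bookkeeping, so I do not expect a serious obstacle; the one point that needs care is the asymmetric treatment of the index $j=1$ --- this is precisely why $m_1$ appears in the conclusion only through the correction term $\tfrac{H_V(d)-1}{d_1}\bigl(\tfrac1{m_2+1}-\tfrac1{m_1+1}\bigr)(1-\delta_f(Q_1))$ --- and making sure the factor $1-\tfrac{H_V(d)-1}{m_2+1}$ remains correctly attached to each truncated counting function throughout the manipulation.
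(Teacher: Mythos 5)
Your proposal is correct and follows essentially the same route as the paper: apply the Second Main Theorem (Theorem \ref{T21}), split each counting function into the parts with multiplicity $\leq m_j$ and $>m_j$, use $N^{[H_V(d)-1]}_{Q_j(f),>m_j}(r)\leq \frac{H_V(d)-1}{m_j+1}N_{Q_j(f),>m_j}(r)$ together with the deficiency bound, and treat the index $j=1$ asymmetrically to produce the correction term $\frac{H_V(d)-1}{d_1}\bigl(\frac{1}{m_2+1}-\frac{1}{m_1+1}\bigr)(1-\delta_f(Q_1))$. Your $E_j$-bookkeeping is only a cosmetic repackaging of the paper's chain of inequalities, and the final coefficients agree exactly.
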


\begin{proof}
By the Second Main Theorem (\ref{T21}), we have
\begin{eqnarray*}\| &&\left[q-\frac{(2N-k+1)H_{V}(d)}{k+1}\right]T_{f}(r)\\
&\leq& \sum_{j=1}^{q}\frac{1}{d_{j}}N^{[H_{V}(d)-1]}_{Q_{j}(f)}(r)+o(T_{f}(r))\\
&\leq& \sum_{j=1}^{q}\frac{1}{d_{j}}\left[N^{[H_{V}(d)-1]}_{Q_{j}(f), \leq m_{j}}(r)+N^{[H_{V}(d)-1]}_{Q_{j}(f), > m_{j}}(r)\right]+o(T_{f}(r))\\
&\leq& \sum_{j=1}^{q}\frac{1}{d_{j}}\left[N^{[H_{V}(d)-1]}_{Q_{j}(f), \leq m_{j}}(r)+\frac{H_{V}(d)-1}{m_{j}+1}N_{Q_{j}(f), > m_{j}}(r)\right]+o(T_{f}(r))\\
&\leq& \sum_{j=1}^{q}\frac{1}{d_{j}}\left[N^{[H_{V}(d)-1]}_{Q_{j}(f), \leq m_{j}}(r)+\frac{H_{V}(d)-1}{m_{j}+1}\left(N_{Q_{j}(f)}(r)-N^{[H_{V}(d)-1]}_{Q_{j}(f), \leq m_{j}}(r)\right)\right]+o(T_{f}(r))\\
&\leq& \sum_{j=1}^{q}\frac{1}{d_{j}}\left[N^{[H_{V}(d)-1]}_{Q_{j}(f), \leq m_{j}}(r)+\frac{H_{V}(d)-1}{m_{j}+1}\left(N_{Q_{j}(f)}(r)-N^{[H_{V}(d)-1]}_{Q_{j}(f), \leq m_{j}}(r)\right)\right]+o(T_{f}(r)).\end{eqnarray*}
 Noting that $N_{Q_{j}(f)}(r)\leq (1-\delta_{f}(Q_{j}))T_{f}(r),$ we get from the above inequality that
\begin{eqnarray*}\| &&\left[q-\frac{(2N-k+1)H_{V}(d)}{k+1}\right]T_{f}(r)\\
&\leq&\sum_{j=1}^{q}\frac{1}{d_{j}}\left(1-\frac{H_{V}(d)-1}{m_{j}+1}\right)N^{[H_{V}(d)-1]}_{Q_{j}(f), \leq m_{j}}(r)\\&&+\sum_{j=1}^{q}\frac{1}{d_{j}}\frac{H_{V}(d)-1}{m_{j}+1}(1-\delta_{f}(Q_{j}))T_{f}(r)+o(T_{f}(r)),\end{eqnarray*}
thus,
\begin{eqnarray*}\| &&\left[q-\frac{(2N-k+1)H_{V}(d)}{k+1}\right]T_{f}(r)\\
&\leq&\frac{1}{d_{1}}\left(1-\frac{H_{V}(d)-1}{m_{1}+1}\right)N^{[H_{V}(d)-1]}_{Q_{1}(f), \leq m_{1}}(r)+\sum_{j=2}^{q}\frac{1}{d_{j}}\left(1-\frac{H_{V}(d)-1}{m_{2}+1}\right)N^{[H_{V}(d)-1]}_{Q_{j}(f), \leq m_{j}}(r)\\&&+\sum_{j=1}^{q}\frac{1}{d_{j}}\frac{H_{V}(d)-1}{m_{j}+1}(1-\delta_{f}(Q_{j}))T_{f}(r)+o(T_{f}(r))\\
&\leq&\frac{H_{V}(d)-1}{d_{1}}\left(\frac{1}{m_{2}+1}-\frac{1}{m_{1}+1}\right)N^{[H_{V}(d)-1]}_{Q_{1}(f), \leq m_{1}}(r)\\&&+\sum_{j=1}^{q}\frac{1}{d_{j}}\left(1-\frac{H_{V}(d)-1}{m_{2}+1}\right)N^{[H_{V}(d)-1]}_{Q_{j}(f), \leq m_{j}}(r)
\\&&+\sum_{j=1}^{q}\frac{1}{d_{j}}\frac{H_{V}(d)-1}{m_{j}+1}(1-\delta_{f}(Q_{j}))T_{f}(r)+o(T_{f}(r)).\end{eqnarray*}

Together with $N^{[H_{V}(d)-1]}_{Q_{1}(f), \leq m_{1}}(r)\leq (1-\delta_{f}(Q_{1}))T_{f}(r),$ the above inequality implies
\begin{eqnarray*}\| &&\left[q-\frac{(2N-k+1)H_{V}(d)}{k+1}\right]T_{f}(r)\\
&\leq&\frac{H_{V}(d)-1}{d_{1}}\left(\frac{1}{m_{2}+1}-\frac{1}{m_{1}+1}\right)(1-\delta_{f}(Q_{1}))T_{f}(r).\\
&&+\sum_{j=1}^{q}\frac{1}{d_{j}}\left(1-\frac{H_{V}(d)-1}{m_{2}+1}\right)N^{[H_{V}(d)-1]}_{Q_{j}(f), \leq m_{j}}(r)
\\&&+\sum_{j=1}^{q}\frac{1}{d_{j}}\frac{H_{V}(d)-1}{m_{j}+1}(1-\delta_{f}(Q_{j}))T_{f}(r)+o(T_{f}(r)).
\end{eqnarray*}
Hence, the lemma is proved.
\end{proof}

By a similar discussion as in proof of Lemma \ref{L1} using Theorem \ref{T22} instead of Theorem \ref{T21}, one can obtain the  following lemma.\par

\begin{lemma}\label{L2} Let $f$ be a meromorphic mapping of $\mathbb{C}^{m}$ into $\mathbb{P}^{n}(\mathbb{C}).$ Let $Q_{i}$$(i=1,\ldots, q)$ be slowly (with respect to $f$) moving hypersurfaces of  $\mathbb{P}^{n}(\mathbb{C})$ in weakly general position with $\deg Q_{i}=d_{i}.$ Put $d$ be the least common multiple of $d_{1}, \ldots, d_{n+2}$, i. e., $d=lcm(d_{1}, \ldots, d_{n+2}),$ and $N=\left(   \begin{array}{c}    n+d \\     n \\     \end{array}  \right)-1.$ \par
 Then
\begin{eqnarray*}\| &&\left[\frac{q}{p}-\frac{N}{d_{1}}\left(\frac{1}{m_{2}+1}-\frac{1}{m_{1}+1}\right)\left(1-\delta_{f}(Q_{1})\right)\right.\\
&&\left.-\sum_{j=1}^{q}\frac{1}{d_{j}}\frac{N}{m_{j}+1}(1-\delta_{f}(Q_{j}))\right]T_{f}(r)
\\&\leq& \sum_{j=1}^{q}\frac{1}{d_{j}}\left(1-\frac{N}{m_{2}+1}\right)N^{[N]}_{Q_{j}(f), \leq m_{j}}(r)+o(T_{f}(r)),\end{eqnarray*}
 where $$p:=\left\{
           \begin{array}{ll}
             nN+n+1, & \hbox{if $Q_{j}(f)\not\equiv 0$ and $q\geq nN+n+1$;} \\
             N+2, & \hbox{if $f$ is algebraically nondegenerate over $\tilde{\mathcal{K}}_{\{Q_{j}\}_{j=1}^{q}}$ and $q\geq N+2,$}
           \end{array}
         \right.
 $$
 and
 $m_{1}\geq m_{2}\geq \cdots \geq m_{q}$ are integers or $\infty.$\end{lemma}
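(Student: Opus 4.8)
\emph{Proof proposal.} The plan is to mimic the proof of Lemma~\ref{L1} verbatim, replacing Theorem~\ref{T21} by Theorem~\ref{T22} and the truncation level $H_{V}(d)-1$ by $N$. First I would invoke Theorem~\ref{T22}: under the first alternative ($Q_{j}(f)\not\equiv 0$ for all $j$ and $q\geq nN+n+1$) part~(i) applies, and under the second ($f$ algebraically nondegenerate over $\tilde{\mathcal{K}}_{\{Q_{j}\}_{j=1}^{q}}$ and $q\geq N+2$) part~(ii) applies; in either case, writing $p=nN+n+1$ or $p=N+2$ accordingly, one has
\[
\|\ \frac{q}{p}\,T_{f}(r)\ \leq\ \sum_{j=1}^{q}\frac{1}{d_{j}}N^{[N]}_{Q_{j}(f)}(r)+o(T_{f}(r)).
\]
Carrying $p$ along as a parameter lets me handle both cases with one computation, which is exactly why the statement is phrased with $p$.

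Next, for each $j$ I would split $N^{[N]}_{Q_{j}(f)}(r)=N^{[N]}_{Q_{j}(f),\leq m_{j}}(r)+N^{[N]}_{Q_{j}(f),>m_{j}}(r)$ and bound the second piece. From the elementary pointwise inequality $\min\{\nu_{Q_{j}(f)}(z),N\}\leq\frac{N}{m_{j}+1}\nu_{Q_{j}(f)}(z)$, valid at every $z$ with $\nu_{Q_{j}(f)}(z)>m_{j}$, together with $N_{Q_{j}(f),\leq m_{j}}(r)\geq N^{[N]}_{Q_{j}(f),\leq m_{j}}(r)$, I get
\[
N^{[N]}_{Q_{j}(f),>m_{j}}(r)\ \leq\ \frac{N}{m_{j}+1}\,N_{Q_{j}(f),>m_{j}}(r)\ \leq\ \frac{N}{m_{j}+1}\Big(N_{Q_{j}(f)}(r)-N^{[N]}_{Q_{j}(f),\leq m_{j}}(r)\Big),
\]
hence $N^{[N]}_{Q_{j}(f)}(r)\leq\big(1-\frac{N}{m_{j}+1}\big)N^{[N]}_{Q_{j}(f),\leq m_{j}}(r)+\frac{N}{m_{j}+1}N_{Q_{j}(f)}(r)$. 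The defect inequality $N_{Q_{j}(f)}(r)\leq(1-\delta_{f}(Q_{j}))T_{f}(r)+o(T_{f}(r))$ then turns the last term into a multiple of $T_{f}(r)$, so that
\[
\|\ \frac{q}{p}\,T_{f}(r)\ \leq\ \sum_{j=1}^{q}\frac{1}{d_{j}}\Big(1-\frac{N}{m_{j}+1}\Big)N^{[N]}_{Q_{j}(f),\leq m_{j}}(r)+\sum_{j=1}^{q}\frac{1}{d_{j}}\frac{N}{m_{j}+1}(1-\delta_{f}(Q_{j}))T_{f}(r)+o(T_{f}(r)).
\]

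The last step is the bookkeeping that produces the uniform coefficient $1-\frac{N}{m_{2}+1}$. For $j\geq 2$ one has $m_{j}\leq m_{2}$, hence $1-\frac{N}{m_{j}+1}\leq 1-\frac{N}{m_{2}+1}$, and since $N^{[N]}_{Q_{j}(f),\leq m_{j}}(r)\geq 0$ those summands only increase. For $j=1$ I would write $1-\frac{N}{m_{1}+1}=\big(1-\frac{N}{m_{2}+1}\big)+N\big(\frac{1}{m_{2}+1}-\frac{1}{m_{1}+1}\big)$ (the second term being nonnegative as $m_{1}\geq m_{2}$), and estimate the extra contribution by $\frac{N}{d_{1}}\big(\frac{1}{m_{2}+1}-\frac{1}{m_{1}+1}\big)N^{[N]}_{Q_{1}(f),\leq m_{1}}(r)\leq\frac{N}{d_{1}}\big(\frac{1}{m_{2}+1}-\frac{1}{m_{1}+1}\big)(1-\delta_{f}(Q_{1}))T_{f}(r)+o(T_{f}(r))$, using $N^{[N]}_{Q_{1}(f),\leq m_{1}}(r)\leq N_{Q_{1}(f)}(r)$. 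Collecting all the $T_{f}(r)$-terms onto the left-hand side yields precisely the claimed inequality.

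I do not expect a genuine obstacle: the argument is a mechanical transcription of the proof of Lemma~\ref{L1}. The only points that deserve a moment's care are (i) the pointwise truncation estimate $\min\{\nu,N\}\leq\frac{N}{m_{j}+1}\nu$ for $\nu>m_{j}$, which needs a two-line case check according to whether $\nu\geq N$ or $\nu<N$; (ii) the accumulation of the $o(T_{f}(r))$ error terms, harmless since $q$ is finite; and (iii) selecting the correct branch of Theorem~\ref{T22}, handled uniformly via the parameter $p$.
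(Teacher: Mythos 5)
Your proposal is correct and is exactly the argument the paper intends: the paper gives no separate proof of Lemma~\ref{L2}, stating only that it follows ``by a similar discussion as in proof of Lemma~\ref{L1} using Theorem~\ref{T22} instead of Theorem~\ref{T21},'' and your transcription (truncation level $N$ in place of $H_{V}(d)-1$, the left-hand constant $q/p$ from the appropriate branch of Theorem~\ref{T22}, the split at $m_{j}$, the defect bound, and the separate treatment of the $j=1$ term) reproduces that proof faithfully.
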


\section{Proof of Theorem \ref{T1}}

Suppose that $f$ and $g$ have reduced representations respectively as follows:
$$f=(f_{0}: f_{1}: \cdots: f_{n})\quad \mbox{and}\quad g=(g_{0}: g_{1}: \cdots: g_{n}).$$ Assume that $f\neq g.$ Then there two distinct indices $s$ and $t$ in $\{0, 1, \ldots, n\}$ such that $$H:=f_{s}g_{t}-f_{t}g_{s}\not\equiv 0.$$ By the assumptions (a) and (b) of the theorem, we get that any $z$ in $$\cup_{j=1}^{q}{\{z\in\mathbb{C}^{m}:0<\nu_{Q_{j}(f)}(z)\leq m_{j}}\}$$ must be a zero of $H,$ and further more,
$$\nu_{H}(z)\geq \sum_{j=1}^{q}\min\{\nu_{Q_{j}(f), \leq m_{j}}(z),1\}$$ outside an analytic subset of codimension at least two, which implies that
$$N_{H}(r)\geq \sum_{j=1}^{q}N_{Q_{j}(f), \leq m_{j}}^{[1]}(r).$$

On the other hand, on can get from the definition of the characteristic function and Jensen formula that
\begin{eqnarray*}N_{H}(r)&=&\int_{S_{m}(r)}\log|f_{s}g_{t}-f_{t}g_{s}|\sigma_{m}\\
&\leq& \int_{S_{m}(r)}\log||f||\sigma_{m}+\int_{S_{m}(r)}\log||g||\sigma_{m}\\
&=&T_{f}(r)+T_{g}(r).
\end{eqnarray*} Therefore, we obtain from two inequalities above that
$$\sum_{j=1}^{q}N_{Q_{j}(f), \leq m_{j}}^{[1]}(r)\leq T_{f}(r)+T_{g}(r).$$
Similar discussion for $g$ we have $$\sum_{j=1}^{q}N_{Q_{j}(g), \leq m_{j}}^{[1]}(r)\leq T_{f}(r)+T_{g}(r).$$ Hence,
\begin{eqnarray*} 2(T_{f}(r)+T_{g}(r))&\geq& \sum_{j=1}^{q}N_{Q_{j}(f), \leq m_{j}}^{[1]}(r)+\sum_{j=1}^{q}N_{Q_{j}(g), \leq m_{j}}^{[1]}(r)\\
&=& \sum_{j=1}^{q}N_{Q_{j}^{\frac{d}{d_{j}}}(f), \leq m_{j}}^{[1]}(r)+\sum_{j=1}^{q}N_{Q_{j}^{\frac{d}{d_{j}}}(g), \leq m_{j}}^{[1]}(r).
\end{eqnarray*}

By Lemma \ref{L1} for $f$ and the hypersurfaces $Q_{j}^{\frac{d}{d_{j}}}$ of the common degree $d,$ we have
\begin{eqnarray*}\| &&\left[q-\frac{(2N-k+1)H_{V}(d)}{k+1}-\frac{H_{V}(d)-1}{d}\left(\frac{1}{m_{2}+1}-\frac{1}{m_{1}+1}\right)\right.\\
&&\left.-\sum_{j=1}^{q}\frac{1}{d}\frac{H_{V}(d)-1}{m_{j}+1}\right]T_{f}(r)
\\&\leq& \sum_{j=1}^{q}\frac{1}{d}\left(1-\frac{H_{V}(d)-1}{m_{2}+1}\right)N^{[H_{V}(d)-1]}_{Q_{j}^{\frac{d}{d_{j}}}(f), \leq m_{j}}(r)+o(T_{f}(r)),\end{eqnarray*}
which means
\begin{eqnarray*}\| &&\left[qd-\frac{d(2N-k+1)H_{V}(d)}{k+1}-(H_{V}(d)-1)\left(\frac{1}{m_{2}+1}-\frac{1}{m_{1}+1}\right)\right.\\
&&\left.-\sum_{j=1}^{q}\frac{H_{V}(d)-1}{m_{j}+1}\right]T_{f}(r)
\\&\leq& \sum_{j=1}^{q}\left(1-\frac{H_{V}(d)-1}{m_{2}+1}\right)N^{[H_{V}(d)-1]}_{Q_{j}^{\frac{d}{d_{j}}}(f), \leq m_{j}}(r)+o(T_{f}(r))\\
&\leq& (H_{V}(d)-1)\left(1-\frac{H_{V}(d)-1}{m_{2}+1}\right)\sum_{j=1}^{q}N^{[1]}_{Q_{j}^{\frac{d}{d_{j}}}(f), \leq m_{j}}(r)+o(T_{f}(r)).\end{eqnarray*}
Similarly, for $g$ we also have
\begin{eqnarray*}\| &&\left[qd-\frac{d(2N-k+1)H_{V}(d)}{k+1}-(H_{V}(d)-1)\left(\frac{1}{m_{2}+1}-\frac{1}{m_{1}+1}\right)\right.\\
&&\left.-\sum_{j=1}^{q}\frac{H_{V}(d)-1}{m_{j}+1}\right]T_{g}(r)\\
&\leq& (H_{V}(d)-1)\left(1-\frac{H_{V}(d)-1}{m_{2}+1}\right)\sum_{j=1}^{q}N^{[1]}_{Q_{j}^{\frac{d}{d_{j}}}(g), \leq m_{j}}(r)+o(T_{g}(r)).\end{eqnarray*}\par

Henceforth,  we get form the inequalities above that
\begin{eqnarray*}\| &&\left[qd-\frac{d(2N-k+1)H_{V}(d)}{k+1}-(H_{V}(d)-1)\left(\frac{1}{m_{2}+1}-\frac{1}{m_{1}+1}\right)\right.\\
&&\left.-\sum_{j=1}^{q}\frac{H_{V}(d)-1}{m_{j}+1}\right](T_{f}(r)+T_{g}(r))\\
&\leq& 2(H_{V}(d)-1)\left(1-\frac{H_{V}(d)-1}{m_{2}+1}\right)(T_{f}(r)+T_{g}(r))+o(T_{f}(r)+T_{g}(r)),\end{eqnarray*}\par
which implies
\begin{eqnarray*}\| &&\sum_{j=3}^{q}\frac{m_{j}}{m_{j}+1}(T_{f}(r)+T_{g}(r))\\
&\leq& \left(q+\frac{d(2N-k+1)H_{V}(d)}{(k+1)(H_{V}(d)-1)}-\frac{qd}{H_{V}(d)-1}-\frac{4-2H_{V}(d)}{m_{2}+1})\right)(T_{f}(r)+T_{g}(r))\\
&&+o((T_{f}(r)+T_{g}(r))).
\end{eqnarray*}This is a contradiction. Therefore, we completely prove the theorem.

\section{Proof of Theorem \ref{T2}}
The proof of this theorem is some similar as the proof of Theorem \ref{T1}. Assume that $f$ and $g$ have reduced representations respectively as follows:
$$f=(f_{0}: f_{1}: \cdots: f_{n})\quad \mbox{and}\quad g=(g_{0}: g_{1}: \cdots: g_{n}).$$ Then by Lemma \ref{L2} for $f$ and the hypersurfaces $Q_{j}^{\frac{d}{d_{j}}}$ of the common degree $d,$ we have
\begin{eqnarray*}\| &&\left[\frac{q}{p}-\frac{N}{d}\left(\frac{1}{m_{2}+1}-\frac{1}{m_{1}+1}\right)-\sum_{j=1}^{q}\frac{1}{d}\frac{N}{m_{j}+1}\right]T_{f}(r)
\\&\leq& \sum_{j=1}^{q}\frac{1}{d}\left(1-\frac{N}{m_{2}+1}\right)N^{[N]}_{Q_{j}^{\frac{d}{d_{j}}}(f), \leq m_{j}}(r)+o(T_{f}(r)),\end{eqnarray*} which means that
\begin{eqnarray*}\| &&\left[\frac{qd}{p}-N\left(\frac{1}{m_{2}+1}-\frac{1}{m_{1}+1}\right)-\sum_{j=1}^{q}\frac{N}{m_{j}+1}\right]T_{f}(r)
\\&\leq& \left(1-\frac{N}{m_{2}+1}\right)\sum_{j=1}^{q}N^{[N]}_{Q_{j}^{\frac{d}{d_{j}}}(f), \leq m_{j}}(r)+o(T_{f}(r)).\end{eqnarray*} Similarly for $g,$
\begin{eqnarray*}\| &&\left[\frac{qd}{p}-N\left(\frac{1}{m_{2}+1}-\frac{1}{m_{1}+1}\right)-\sum_{j=1}^{q}\frac{N}{m_{j}+1}\right]T_{g}(r)
\\&\leq& \left(1-\frac{N}{m_{2}+1}\right)\sum_{j=1}^{q}N^{[N]}_{Q_{j}^{\frac{d}{d_{j}}}(g), \leq m_{j}}(r)+o(T_{g}(r)).\end{eqnarray*}Hence,
\begin{eqnarray*}\| &&\left[\frac{qd}{p}-N\left(\frac{1}{m_{2}+1}-\frac{1}{m_{1}+1}\right)-\sum_{j=1}^{q}\frac{N}{m_{j}+1}\right](T_{f}(r)+T_{g}(r))
\\&\leq& \left(1-\frac{N}{m_{2}+1}\right)\sum_{j=1}^{q}\left(N^{[N]}_{Q_{j}^{\frac{d}{d_{j}}}(f), \leq m_{j}}(r)+N^{[N]}_{Q_{j}^{\frac{d}{d_{j}}}(g), \leq m_{j}}(r)\right)+o(T_{f}(r)+T_{g}(r)),\end{eqnarray*} where $$p:=\left\{
           \begin{array}{ll}
             nN+n+1, & \hbox{if $Q_{j}(f)\not\equiv 0$ and $q\geq nN+n+1$;} \\
             N+2, & \hbox{if $f$ is algebraically nondegenerate over $\tilde{\mathcal{K}}_{\{Q_{j}\}_{j=1}^{q}}$ and $q\geq N+2,$}
           \end{array}
         \right.
 $$
\par

Assume that $f\neq g.$ Then there two distinct indices $s$ and $t$ in $\{0, 1, \ldots, n\}$ such that $$H:=f_{s}g_{t}-f_{t}g_{s}\not\equiv 0.$$ Set $S=\cup\{\cap_{j=0}^{k}\nu_{Q_{i_{j}}(f)}; 1\leq i_{0}<\ldots<i_{k}\leq q\}.$ Then $S$ is either an analytic subset of codimension at least two of $\mathbb{C}^{m}$ or an empty set. By the assumptions (a) and (b) of the theorem, we get that any $z$ in $$\cup_{j=1}^{q}{\{z\in\mathbb{C}^{m}:0<\nu_{Q_{j}(f)}(z)\leq m_{j}}\}\setminus S$$  must be a zero of $H,$ and further more,
$$\nu_{H}(z)\geq \frac{1}{k}\sum_{j=1}^{q}\min\{\nu_{Q_{j}(f), \leq m_{j}}(z),1\}$$ outside the analytic subset $S$ of codimension at least two, which implies that
$$N_{H}(r)\geq \frac{1}{k}\sum_{j=1}^{q}N_{Q_{j}(f), \leq m_{j}}^{[1]}(r).$$

On the other hand, we can get also from the definition of the characteristic function and Jensen formula that
\begin{eqnarray*}N_{H}(r)&=&\int_{S_{m}(r)}\log|f_{s}g_{t}-f_{t}g_{s}|\sigma_{m}\\
&\leq& \int_{S_{m}(r)}\log||f||\sigma_{m}+\int_{S_{m}(r)}\log||g||\sigma_{m}\\
&=&T_{f}(r)+T_{g}(r).
\end{eqnarray*} Therefore, we obtain from two inequalities above that
$$\frac{1}{k}\sum_{j=1}^{q}N_{Q_{j}(f), \leq m_{j}}^{[1]}(r)\leq T_{f}(r)+T_{g}(r).$$
Similar discussion for $g$ we have $$\frac{1}{k}\sum_{j=1}^{q}N_{Q_{j}(g), \leq m_{j}}^{[1]}(r)\leq T_{f}(r)+T_{g}(r).$$ Hence, we get that
\begin{eqnarray*} 2(T_{f}(r)+T_{g}(r))&\geq& \frac{1}{k}\sum_{j=1}^{q}N_{Q_{j}(f), \leq m_{j}}^{[1]}(r)+\frac{1}{k}\sum_{j=1}^{q}N_{Q_{j}(g), \leq m_{j}}^{[1]}(r)\\
&=& \frac{1}{k}\sum_{j=1}^{q}N_{Q_{j}^{\frac{d}{d_{j}}}(f), \leq m_{j}}^{[1]}(r)+\frac{1}{k}\sum_{j=1}^{q}N_{Q_{j}^{\frac{d}{d_{j}}}(g), \leq m_{j}}^{[1]}(r)\\
&\geq& \frac{1}{kN}\sum_{j=1}^{q}\left(N_{Q_{j}^{\frac{d}{d_{j}}}(f), \leq m_{j}}^{[N]}(r)+N_{Q_{j}^{\frac{d}{d_{j}}}(g), \leq m_{j}}^{[N]}(r)\right).\\
\end{eqnarray*}

Henceforth, we get that
\begin{eqnarray*}\| &&\left[\frac{qd}{p}-N\left(\frac{1}{m_{2}+1}-\frac{1}{m_{1}+1}\right)-\sum_{j=1}^{q}\frac{N}{m_{j}+1}\right](T_{f}(r)+T_{g}(r))\\
&\leq& 2kN\left(1-\frac{N}{m_{2}+1}\right)(T_{f}(r)+T_{g}(r))+o(T_{f}(r)+T_{g}(r)),\end{eqnarray*}\par
which implies
\begin{eqnarray*}\| &&\sum_{j=3}^{q}\frac{m_{j}}{m_{j}+1}(T_{f}(r)+T_{g}(r))\\
&\leq& \left(q+2k-2-\frac{qd}{pN}-\frac{2kN-2}{m_{2}+1})\right)(T_{f}(r)+T_{g}(r))\\
&&+o((T_{f}(r)+T_{g}(r))).
\end{eqnarray*} This is a contradiction. Therefore, the theorem is completely proved.

\end{document}